\newtheorem{theorem}{Theorem}[section]
\newtheorem{lemma}[theorem]{Lemma}
\newtheorem{proposition}[theorem]{Proposition}
\newtheorem{corollary}[theorem]{Corollary}
\theoremstyle{definition}
\newtheorem{definition}[theorem]{Definition}
\theoremstyle{remark}
\newtheorem{remark}[theorem]{Remark}
\numberwithin{equation}{section}
\begin{document}

\title [On approximately orthogonality preserving and reversing operators] {On approximately orthogonality preserving and reversing operators}
\author[D. Khurana ]{ Divya Khurana }

\address[Khurana]{IIM Ranchi, Prabandhan Nagar, Vill-Mudma, Nayasarai Road, Ranchi, Jharkhand-835303, India}
\email{divyakhurana11@gmail.com, divya.khurana@iimranchi.ac.in}

\renewcommand{\subjclassname}{\textup{2020} Mathematics Subject Classification}
\subjclass{46B20; 47L05, 47B01}
\keywords{Approximate Birkhoff-James orthogonality; Roberts orthogonality, Isosceles orthogonality; Orthogonality preserving and reversing operators.}
\begin{abstract}
We study approximately orthogonality (in the sense of Dragomir) preserving and reversing operators. We show that for some orthogonality notations, an operator defined from a finite-dimensional Banach space to a normed linear space is approximately orthogonality preserving/reversing if and only if it is an injective operator. This result implies that for some orthogonality notations, any operator defined from an $n$-dimensional Banach space to another $n$-dimensional Banach space is approximately orthogonality preserving/reversing if and only if it is a scalar multiple of an $\varepsilon$-isometry.  We show that any $\varepsilon$-isometry and maps close to $\varepsilon$-isometries defined from a normed linear space to another normed linear space are approximately orthogonality preserving/reversing for some orthogonality notations. We also study the locally approximate orthogonality preserving and reversing operators defined on some finite-dimensional Banach spaces.
\end{abstract}
\maketitle
\section{Introduction}

 Throughout this article, symbols $X$ and $Y$ will be used for real normed linear spaces. We denote $B_X=\{x:x\in X, \|x\|\leq1\}$ and $S_X=\{x:x\in X, \|x\|=1\}$. We use the symbol $\mathcal{L}(X,Y)$  to denote the space of bounded linear operators from  $X$ to  $Y$. The dual space of $X$ will be denoted by $X^*$.

We denote $\mathcal{J}(x)=\{f: f\in S_{X^*}, f(x)=\|x\|\}$ for $0\not=x\in X$.  The Hahn-Banach theorem ensures that $\mathcal{J}(x)$ is always a non-empty set for each $0\not=x\in X$. If $\mathcal{J}(x)$ is a singleton set for some $0\not=x\in X$, then $x$ is said to be a smooth point.  

 For $x,y\in X$, $x$ is said to be  Birkhoff-James orthogonal to $y$ (denoted by $x\perp_By$) if $\|x+\lambda y\|\geq \|x\|$ for all $\lambda\in\mathbb{R}$ (see \cite{Birkhoff}, \cite{J} and \cite{James2}).

 There are various other notions of orthogonality and approximate Birkhoff-James orthogonality. We recall a few of these notions related to our work. Let $X$ be a normed linear space and $x,y\in X$. If $\|x+y\|= \|x-y\|$ then we say that $x$ is isosceles orthogonal to $y$ (denoted by $x\perp_I y$)(see \cite{J}); if  $\|x+\lambda y\|= \|x-\lambda y\|$ for all $\lambda\in\mathbb{R}$ then we say that $x$ is Roberts orthogonal to $y$ (denoted by $x\perp_R y$) (see \cite{Robert}). 
 
 Dragomir introduced the following notion of approximate  Birkhoff-James orthogonality in \cite{Drag}.  Let $X$ be a normed linear space, $x,y\in X$ and $\varepsilon\in[0,1)$, we say that $x$ is approximately Birkhoff-James orthogonal to $y$ if $\|x+\lambda y\|\geq (1-\varepsilon) \|x\|$ for all scalars $\lambda\in\mathbb{R}$. The following modification in this definition was given by Chmieli\'{n}ski (see \cite{C}).  
Let $X$ be a normed linear space, $x,y\in X$ and $\varepsilon\in[0,1)$, then $x$ is said to be
approximately Birkhoff-James orthogonal to $y$ (denoted by $x\perp_{D^\varepsilon} y$) if $\|x+\lambda y\|\geq
\sqrt{1-\varepsilon^2}\|x\|$ for all $\lambda\in\mathbb{R}$. 

The following definition of approximate Birkhoff-James orthogonality was also introduced by Chmieli\'{n}ski in \cite{C}. Let $X$ be a normed linear space, $x,y \in X$ and $\varepsilon\in
[0,1)$, we say that  $x$ is approximately orthogonal to $y$ (denoted by $x\perp_{B^\varepsilon} y$) if $\|x+\lambda y\|^2\geq
\|x\|^2-2\varepsilon\|x\|\|\lambda y\|$ for all $\lambda \in
\mathbb{R}$.

Let $X$ be a normed linear space, $\varepsilon\in[0,1)$ and  $\Delta\in\{R, B,B^\varepsilon, D^\varepsilon\}$. Then $\perp_\Delta$ satisfies the homogeneity property, i.e., $x\perp_\Delta y$ if and only if $\alpha x\perp_\Delta \beta y$ for all $x,y\in X$ and $\alpha,\beta\in\mathbb{R}$. Thus, while working with any $\Delta\in\{R, B,B^\varepsilon, D^\varepsilon\}$ we can restrict ourself to $S_X$ for $\varepsilon\in[0,1)$. By using the notations of \cite{WHY}, if we restrict to $x,y\in S_X$ in the definition of the isosceles orthogonality, then we will denote it as $x\perp_{UI} y$.

For $x\in S_X$, $\varepsilon\in[0,1)$ and  $\Delta\in\{UI, R, B,B^\varepsilon, D^\varepsilon\}$, we denote $x^{\perp_\Delta}=\{y\in S_X: x\perp_\Delta y\}$.

The following are equivalent characterizations of some of the orthogonality notations used in our work. Let $X$ be a normed linear space and $0\not=x\in X$, $y\in X$. Then 

\begin{itemize}
    
\item  $x\perp_B y$ $\iff$ $f(y)=0$ for some $f\in \mathcal{J}(x)$ (\cite[Theorem 2.1]{James2});

\item   for $\varepsilon\in[0,1)$, $x\perp_{D^\varepsilon} y$ $\iff$ there exists $f\in S_{X^*}$ such that $\ |f(x)|\geq \sqrt{1-\varepsilon^2}\|x\|\mbox{~ and~} f(y)=0$ (\cite[Lemma 3.2]{MSP});

\item for $\varepsilon\in[0,1)$, $x\perp_{B^\varepsilon} y \iff |f(y)|\leq \varepsilon \|y\| \mbox{~for~some~}f\in \mathcal{J}(x)$ (\cite[Theorem 2.3] {CSW}).

\end{itemize}

Let $X,Y$ be normed linear spaces and $T\in \mathcal{L}(X,Y)$. Then $T$ is said to be orthogonality preserving (reversing) if $x\perp_B y$ implies $Tx\perp_B Ty$ ($Ty\perp_B Tx)$ for all $x,y\in X$.  If $T$ is an orthogonality preserving operator defined on a Banach space $X$ then it was proved in  \cite{T4} that  $T$ is a scalar multiple of a linear isometry.  This result is also true if $T$ is a bounded linear operator defined from a complex normed linear space $X$ to a complex normed linear space $Y$ (see \cite{T3} for details). In \cite{WOR} and \cite{WHY} it was proved that if $X$ is a normed linear space with $dim~X\geq 3$ then $X$ has a non-zero orthogonality reversing operator if and only if $X$ is an inner product space. For more details on orthogonality preserving and reversing operators, we refer the readers to  \cite{T3}, \cite{CAOT}, \cite{GPSBOOKC}, \cite{T4}, \cite{WOR}, \cite{WHY} and the references cited therein.

The following version of approximate orthogonality preserving and reversing operators was introduced in \cite{AOT}.

\begin{definition}
    Let $X$ and $Y$ be normed linear spaces. Let $T\in \mathcal{L}(X,Y)$ and $\varepsilon\in[0,1)$.  We say that $T$ is $\varepsilon$-orthogonality preserving (reversing), if $x\perp_B y$ implies $Tx\perp_{D^\varepsilon} Ty$ ($Ty\perp_{D^\varepsilon }Tx$) for all $x,y\in X$.
\end{definition}

Recently, local orthogonality preserving operators defined on finite-dimensional Banach spaces have been studied in \cite{SMP}.

Motivated by these definitions of orthogonality preserving/reversing operators we now introduce the following notions of approximately orthogonality preserving/reversing operators and locally approximately orthogonality preserving/reversing operators. 

\begin{definition}\label{def1}
    Let $X,Y$ be normed linear spaces and $T\in \mathcal{L}(X,Y)$. Let $\varepsilon,\eta\in[0,1)$ and  $\Delta\in\{I, R, B,B^\varepsilon, D^\varepsilon\}$. We say that $T$ is $\Delta D^\eta$-orthogonality preserving (reversing), in short, $\Delta D^\eta$-P ($\Delta D^\eta$-R), if $x\perp_\Delta y$ implies $Tx\perp_{D^\eta} Ty$ ($Ty\perp_{D^\eta }Tx$) for all $x,y\in X$.
\end{definition}

\begin{definition}\label{def2}
    Let $X,Y$ be normed linear spaces and $T\in \mathcal{L}(X,Y)$. Let $x\in X$, $\varepsilon,\eta\in[0,1)$ and  $\Delta\in\{I, R, B,B^\varepsilon, D^\varepsilon\}$. We say that $T$ is $\Delta D^\eta$-orthogonality preserving (reversing) at $x$, in short, $\Delta D^\eta$-P ($\Delta D^\eta$-R) at $x$, if $x\perp_\Delta y$ implies $Tx\perp_{D^\eta} Ty$ ($Ty\perp_{D^\eta }Tx$) for all $y\in X$.
\end{definition}

If we take $\Delta=UI$, then to define the analogous version of Definitions~\ref{def1} and \ref{def2} we need to restrict ourselves to $x,y\in S_X$ in the Definitions~\ref{def1} and \ref{def2}.

The following complete characterization of $BD^\eta$-P and $BD^\beta$-R operators defined from a finite-dimensional Banach space to a normed linear space has been observed in \cite{AOT}, where $\eta,\beta\in[0,1)$.

\begin{theorem}\label{AOT}\cite[Theorem 3.4]{AOT}
    Let $X$ be a finite-dimensional Banach space. Let $Y$ be a normed linear space and $T\in \mathcal{L}(X,Y)$. Then the following are equivalent.

    \begin{itemize}
        \item[(i)] $T$ is $B D^\eta$-P for some $\eta\in[0,1)$.
        \item [(ii)] $T$ is $B D^\beta$-R for some $\beta\in[0,1)$.
        \item[(iii)] $T$ is one-to-one.
    \end{itemize}
\end{theorem}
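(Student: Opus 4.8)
The plan is to prove the three conditions equivalent by establishing (iii)$\Rightarrow$(i), (iii)$\Rightarrow$(ii), and then the two converses (i)$\Rightarrow$(iii) and (ii)$\Rightarrow$(iii) by one common argument. Throughout I would use the recalled characterizations $x\perp_B y\iff f(y)=0$ for some $f\in\mathcal{J}(x)$ and $u\perp_{D^\eta}v\iff$ there is $g\in S_{Y^*}$ with $|g(u)|\ge\sqrt{1-\eta^2}\|u\|$ and $g(v)=0$, together with the elementary fact that an injective operator on a finite-dimensional space is bounded below: there is $m>0$ with $m\|x\|\le\|Tx\|\le\|T\|\,\|x\|$ for all $x\in X$ (with $m\le\|T\|$, and $\|T\|>0$ once $X\ne\{0\}$). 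I would dispose at the outset of the trivial case $X=\{0\}$ and, in the converse direction, exclude the degenerate operator $T=0$.

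For (iii)$\Rightarrow$(i): if $x\perp_B y$, then for every $\lambda\in\mathbb{R}$ we have $\|Tx+\lambda Ty\|=\|T(x+\lambda y)\|\ge m\|x+\lambda y\|\ge m\|x\|\ge\frac{m}{\|T\|}\|Tx\|$, so $Tx\perp_{D^\eta}Ty$ for $\eta:=\sqrt{1-(m/\|T\|)^2}\in[0,1)$; hence $T$ is $BD^\eta$-P. For (iii)$\Rightarrow$(ii) this direct estimate fails, since $x\perp_B y$ says nothing about $\|y+\lambda x\|$, so I would first establish the uniform lower bound $\rho:=\inf\{\,\mathrm{dist}(v,\mathbb{R}u):u,v\in S_X,\ u\perp_B v\,\}>0$. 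The set $K:=\{(u,v)\in S_X\times S_X:u\perp_B v\}$ is compact (finite-dimensionality plus closedness of the relation $\perp_B$); the map $(u,v)\mapsto\mathrm{dist}(v,\mathbb{R}u)$ is continuous on $K$ — the only point needing care being that the minimizing scalar $t$ in $\inf_t\|v-tu\|$ stays in a fixed bounded interval because $\|u\|=1$ — and it is strictly positive at every point of $K$, since $u\perp_B v$ with $u,v$ unit vectors forces $v\ne\pm u$, i.e. $v\notin\mathbb{R}u$. Then for any $x\perp_B y$ with $x,y\ne0$, homogeneity of $\perp_B$ and scaling give $\mathrm{dist}(y,\mathbb{R}x)\ge\rho\|y\|$, whence $\mathrm{dist}(Ty,\mathbb{R}Tx)=\inf_t\|T(y-tx)\|\ge m\,\mathrm{dist}(y,\mathbb{R}x)\ge m\rho\|y\|\ge\frac{m\rho}{\|T\|}\|Ty\|$; consequently $\|Ty+\lambda Tx\|\ge\mathrm{dist}(Ty,\mathbb{R}Tx)\ge\sqrt{1-\beta^2}\|Ty\|$ for all $\lambda$, with $\sqrt{1-\beta^2}:=\min\{m\rho/\|T\|,1\}$, i.e. $Ty\perp_{D^\beta}Tx$; the cases $x=0$ or $y=0$ are immediate. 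So $T$ is $BD^\beta$-R.

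For the converses I would argue by contraposition with a single construction. Suppose $T$ is not one-to-one, so $\ker T\ne\{0\}$; excluding $T=0$, we have $0<\dim\ker T<\dim X$. First I claim there exist $x\in S_X\setminus\ker T$ and $f\in\mathcal{J}(x)$ with $f$ not identically $0$ on $\ker T$: otherwise every norming functional of every $x\notin\ker T$ would vanish on $\ker T$, and then for such $x$ and any $f\in\mathcal{J}(x)$ we get $\|x+w\|\ge f(x+w)=\|x\|$ for all $w\in\ker T$, i.e. $x\perp_B\ker T$; but taking unit vectors $v\in\ker T$, $u\notin\ker T$ and setting $x:=v+\frac13u\ (\notin\ker T)$ yields $\|x-v\|=\frac13<\frac23\le\|x\|$, contradicting $x\perp_B v$. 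Now fix such $x,f$. Since $\ker T\not\subseteq\ker f$, the restriction of $f$ to $\ker T$ is nonzero, so $\ker f\cap\ker T$ has codimension $1$ in $\ker T$, and rank--nullity for $T|_{\ker f}$ gives $\dim T(\ker f)=(\dim X-1)-(\dim\ker T-1)=\dim X-\dim\ker T=\dim T(X)$, hence $T(\ker f)=T(X)$. In particular $Tx\in T(\ker f)$, so there is $y\in\ker f$ with $Ty=Tx\ne0$. Then $x\perp_B y$ (because $f\in\mathcal{J}(x)$ and $f(y)=0$), while taking $\lambda=-1$ gives $\|Tx-Ty\|=0<\sqrt{1-\eta^2}\|Tx\|$; thus $Tx\not\perp_{D^\eta}Ty$ and, symmetrically, $Ty\not\perp_{D^\eta}Tx$, for every $\eta\in[0,1)$. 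Hence $T$ is neither $BD^\eta$-P nor $BD^\eta$-R for any $\eta$, giving (i)$\Rightarrow$(iii) and (ii)$\Rightarrow$(iii).

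I expect the converse direction to be the main obstacle: the task is to manufacture, out of mere non-injectivity, an honest pair $x\perp_B y$ with $Tx=Ty\ne0$, and the crux is the dimension count forcing $T(\ker f)=T(X)$ together with the observation that if every norming functional of every $x\notin\ker T$ killed $\ker T$ then every such $x$ would be Birkhoff--James orthogonal to the whole subspace $\ker T$, which is impossible. The secondary technical point is the compactness--continuity lemma yielding $\rho>0$ in (iii)$\Rightarrow$(ii); everything else is routine once the trivial cases $X=\{0\}$ and $T=0$ are set aside.
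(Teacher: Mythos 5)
Your proof is correct, but it takes a genuinely different route from the one the paper uses (the paper does not reprove the cited statement directly; it proves the more general Theorem~\ref{one}, whose $\Delta=B$ case is this theorem). For the forward directions you exploit that $\perp_B$ gives the pointwise inequality $\|x+\lambda y\|\ge\|x\|$, so injectivity plus the bounded-below constant $m$ yields $BD^\eta$-P with the explicit value $\eta=\sqrt{1-(m/\|T\|)^2}$, and a compactness argument producing the uniform gap $\rho=\inf\{\mathrm{dist}(v,\mathbb{R}u):u\perp_B v,\ u,v\in S_X\}>0$ yields $BD^\beta$-R with $\sqrt{1-\beta^2}=\min\{m\rho/\|T\|,1\}$; the paper instead gets both directions non-quantitatively through a sequential compactness characterization (Theorem~\ref{r1}) combined with the symmetry result Proposition~\ref{prop1}, which relies on an external local-uniformity theorem from \cite{CKS2}. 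For the converse, the paper's Lemma~\ref{bounded below} shows that any $BD^\eta$-P operator is automatically continuous and bounded from below via the construction $3x+y\perp_B 3x-\alpha(3x+y)$ with $\alpha=f(3x)/\|3x+y\|$ --- an argument valid in arbitrary normed spaces --- whereas you argue by contraposition, manufacturing from $\ker T\ne\{0\}$ an explicit pair $x\perp_B y$ with $Tx=Ty\ne0$ (via a norming functional $f$ of some $x\notin\ker T$ that does not annihilate $\ker T$, and the rank--nullity count forcing $T(\ker f)=T(X)$). Your version is self-contained and gives explicit constants; the paper's version generalizes both to infinite-dimensional domains (for the necessity of injectivity) and to the other relations $UI$, $B^\varepsilon$, $D^\varepsilon$, where your direct estimate $\|x+\lambda y\|\ge\|x\|$ is not available. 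One caveat you already flag correctly: the zero operator must be excluded for (i)$\Rightarrow$(iii), since $T=0$ is vacuously $BD^\eta$-P; this is a defect of the statement as quoted rather than of your argument.
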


In this paper, we study $\Delta D^\eta$-P and $\Delta D^\eta$-R operators, where $\Delta\in\{I,UI,R,B,B^\varepsilon,\\D^\varepsilon\}$ and $\eta,\varepsilon\in[0,1)$. We show that Theorem~\ref{AOT} is also true if we take $\Delta\in\{UI, B^\varepsilon,D^\varepsilon\}$, where $\varepsilon\in[0,1)$. This result implies that any operator defined from an $n$-dimensional Banach space to another $n$-dimensional Banach space is $\Delta D^\eta$-P if and only if it is a scalar multiple of an  $\varepsilon$-isometry, where $\Delta\in\{UI,B^\beta,D^\beta\}$ and $\beta,\eta\in[0,1)$. We also study operators defined on some finite-dimensional Banach spaces for which locally $B D^\varepsilon$-P at some elements implies that the operator is $B D^\eta$-P, where $\varepsilon,\eta\in[0,1)$.

\section{$\varepsilon$-isometries and approximately orthogonality preserving/reversing operators}
The following result was proved in \cite{CKS2}.

\begin{proposition}\cite[Proposition 2.2]{CKS2}
    Let $X$ be a normed linear space. Let $x,y\in S_X$ be linearly independent elements. Then there exists $\varepsilon_{x,y}\in[0,1)$ such  $x\perp_{D^{\varepsilon_{x,y}}}y$.
\end{proposition}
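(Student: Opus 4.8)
The plan is to read the conclusion straight off the definition of $\perp_{D^\varepsilon}$, or equivalently off its functional characterization recalled in the introduction. Since $x\perp_{D^\varepsilon}y$ means $\|x+\lambda y\|\ge\sqrt{1-\varepsilon^2}\,\|x\|$ for all $\lambda\in\mathbb{R}$, it suffices to bound $\inf_{\lambda}\|x+\lambda y\|$ below by a \emph{positive} multiple of $\|x\|$ and then choose $\varepsilon_{x,y}$ to match that multiple.

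Concretely, I would set $d:=\operatorname{dist}(x,\mathbb{R}y)=\inf_{\lambda\in\mathbb{R}}\|x+\lambda y\|$. As $y\neq0$, the line $\mathbb{R}y$ is a one-dimensional, hence closed, subspace of $X$; as $x,y$ are linearly independent, $x\notin\mathbb{R}y$, and therefore $d>0$. Taking $\lambda=0$ gives $d\le\|x\|=1$, so $d\in(0,1]$. Now define $\varepsilon_{x,y}:=\sqrt{1-d^{2}}$; then $\varepsilon_{x,y}\in[0,1)$ precisely because $d\in(0,1]$, and for every $\lambda\in\mathbb{R}$,
\[
\|x+\lambda y\|\;\ge\;d\;=\;\sqrt{1-\varepsilon_{x,y}^{2}}\;=\;\sqrt{1-\varepsilon_{x,y}^{2}}\,\|x\|,
\]
which is exactly the statement $x\perp_{D^{\varepsilon_{x,y}}}y$. (The same $\varepsilon_{x,y}$ arises from the functional route: define $g(\alpha x+\beta y)=\alpha$ on $\operatorname{span}\{x,y\}$, which is bounded since the span is two-dimensional, normalize it, and extend by Hahn--Banach to $f\in S_{X^{*}}$ with $f(y)=0$ and $|f(x)|=1/\|g\|\in(0,1]$; one checks $1/\|g\|=d$, and $x\perp_{D^{\varepsilon_{x,y}}}y$ follows from the characterization with $\varepsilon_{x,y}=\sqrt{1-|f(x)|^{2}}$.)

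There is no genuine obstacle in this argument; the only points that need care are that the extracted constant $d$ is strictly positive, which is exactly where the hypothesis of linear independence is used, and that $d\le\|x\|=1$, which is what keeps $\varepsilon_{x,y}$ inside the admissible range $[0,1)$.
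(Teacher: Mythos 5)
Your proof is correct and is essentially the same as the paper's own argument (Proposition~\ref{independent}): both take $d=\inf_{\lambda}\|x+\lambda y\|$, show $d\in(0,1]$, and set $\varepsilon_{x,y}=\sqrt{1-d^{2}}$. The only cosmetic difference is that you justify $d>0$ by noting $\mathbb{R}y$ is a closed subspace not containing $x$, whereas the paper runs the equivalent sequence/compactness argument explicitly.
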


We now give another proof of this result that provides a better estimate of $\varepsilon_{x,y}\in[0,1)$ in \cite[Proposition 2.2]{CKS2}.
\begin{proposition}\label{independent}
    Let $X$ be a normed linear space and $x,y\in S_X$ be linearly independent elements.  Let $\alpha=\inf_{\lambda\in \mathbb{R}} \|x+\lambda y\|$. Then $0<\alpha\leq 1$ and  $x\perp_{D^{\varepsilon_{x,y}}}y$, where $\sqrt{1-\alpha^2}=\varepsilon_{x,y}$.
\end{proposition}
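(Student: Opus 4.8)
The plan is to establish the two separate claims: first that $0 < \alpha \le 1$, and then that $x \perp_{D^{\varepsilon_{x,y}}} y$ with $\varepsilon_{x,y} = \sqrt{1-\alpha^2}$. The upper bound $\alpha \le 1$ is immediate by taking $\lambda = 0$ in the infimum, since $\|x\| = 1$. For the strict positivity $\alpha > 0$, I would argue by contradiction: if $\alpha = 0$, then there is a sequence $\lambda_n \in \mathbb{R}$ with $\|x + \lambda_n y\| \to 0$. The sequence $(\lambda_n)$ must be bounded (otherwise, along a subsequence $|\lambda_n| \to \infty$, we would get $\|x/\lambda_n + y\| \to 0$, forcing $\|y\| = 0$ by letting $n \to \infty$ and using $\|x/\lambda_n\| \to 0$, contradicting $y \in S_X$). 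So $(\lambda_n)$ has a convergent subsequence $\lambda_{n_k} \to \lambda_0$, and then $\|x + \lambda_0 y\| = 0$, i.e. $x = -\lambda_0 y$, contradicting linear independence of $x$ and $y$. Hence $\alpha > 0$.

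Next, to obtain the orthogonality statement, I would use the equivalent characterization of $\perp_{D^\varepsilon}$ recalled in the excerpt: $x \perp_{D^\varepsilon} y$ if and only if there exists $f \in S_{X^*}$ with $|f(x)| \ge \sqrt{1-\varepsilon^2}\|x\|$ and $f(y) = 0$. So with $\varepsilon = \varepsilon_{x,y} = \sqrt{1-\alpha^2}$, I need a norm-one functional $f$ vanishing on $y$ with $|f(x)| \ge \alpha$. Consider the quotient map $q : X \to X/\mathrm{span}\{y\}$. Since $x \notin \mathrm{span}\{y\}$ (linear independence), $q(x) \ne 0$, and its quotient norm is precisely
\[
\|q(x)\| = \inf_{\lambda \in \mathbb{R}} \|x + \lambda y\| = \alpha > 0.
\]
By the Hahn--Banach theorem applied in the quotient space, there is a functional $\tilde f$ on $X/\mathrm{span}\{y\}$ of norm one with $\tilde f(q(x)) = \|q(x)\| = \alpha$. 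Pulling back, $f := \tilde f \circ q$ is a functional on $X$ with $\|f\| \le 1$, $f(y) = 0$, and $f(x) = \alpha$. In fact $\|f\| = 1$ since $f$ factors through the quotient norm and $\tilde f$ attains its norm; if one is worried about $\|f\|$ being strictly less than one, one can simply note that $\alpha \le \|f\| \le 1$ is enough provided we check the characterization is monotone — but cleanly, $\|f\| = \|\tilde f\| = 1$ because the quotient map is a metric surjection. Then $|f(x)| = \alpha = \sqrt{1 - \varepsilon_{x,y}^2}\|x\|$ and $f(y) = 0$, so by the cited characterization $x \perp_{D^{\varepsilon_{x,y}}} y$, and since $\alpha > 0$ we have $\varepsilon_{x,y} = \sqrt{1-\alpha^2} \in [0,1)$ as required.

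The main obstacle, such as it is, is the verification that $\alpha > 0$ — this is where linear independence is genuinely used and where a compactness/boundedness argument on the scalars $\lambda_n$ is needed; everything after that is a routine invocation of the quotient-space form of Hahn--Banach together with the recalled characterization of $\perp_{D^\varepsilon}$. One should also double check that the infimum $\alpha$ coincides exactly with the quotient norm $\|q(x)\|$, which is definitional, and that taking $\lambda=0$ legitimately gives $\alpha \le 1$. An alternative to the quotient-map argument, if one prefers to stay within $X^*$, is to directly produce $f$ by a Hahn--Banach extension from $\mathrm{span}\{x,y\}$: define $f$ on this $2$-dimensional subspace by $f(x) = \alpha$, $f(y) = 0$, check $\|f\|_{\mathrm{span}\{x,y\}} = 1$ using the definition of $\alpha$, and extend. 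Both routes are short; I would present the quotient-map version for brevity.
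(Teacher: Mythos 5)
Your proof is correct. The first half (establishing $0<\alpha\leq 1$ via $\lambda=0$ and the bounded-sequence/contradiction argument) is essentially identical to the paper's argument. Where you diverge is the second half: the paper simply observes that, by the very definition of $\alpha$ as an infimum, $\|x+\lambda y\|\geq \alpha=\sqrt{1-\varepsilon_{x,y}^2}\,\|x\|$ for all $\lambda\in\mathbb{R}$, which \emph{is} the definition of $x\perp_{D^{\varepsilon_{x,y}}}y$ --- a one-line conclusion requiring no functional. You instead route through the dual characterization of $\perp_{D^{\varepsilon}}$ from \cite{MSP}, building a norm-one functional vanishing on $y$ with $f(x)=\alpha$ via the quotient map and Hahn--Banach. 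Your construction is sound (the quotient map is a metric surjection, so $\|f\|=\|\tilde f\|=1$, and $\varepsilon_{x,y}<1$ because $\alpha>0$), and it has the side benefit of exhibiting an explicit supporting functional witnessing the orthogonality; but it is strictly more machinery than the statement needs, since the conclusion follows directly from the definition of Dragomir orthogonality once $\alpha$ is identified. If you present this, I would recommend the direct argument and relegate the Hahn--Banach construction to a remark, if you want to keep it at all.
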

\begin{proof}
    Let $\alpha=\inf_{\lambda\in \mathbb{R}} \|x+\lambda y\|$. We claim $0<\alpha\leq 1$. 

    Observe that $\alpha\leq\|x\|=1$. If $\alpha=0$ then there exists $\{\lambda_n\}\subseteq\mathbb{R}$ such that $\lim_{n\longrightarrow\infty}\|x+\lambda_n y\|=0$. Now, $\lim_{n\longrightarrow\infty}\|x+\lambda_n y\|=0$ implies that $\{\lambda_n\}$ can not be an unbounded set. Thus, $\{\lambda_n\}$ is a bounded set and without loss of generality, we assume that $\lambda_n\longrightarrow\lambda$ for some $\lambda\in\mathbb{R}$. This gives $\lim_{n\longrightarrow\infty}\|x+\lambda_n y\|=\|x+\lambda y\|=0$. Thus, $x+\lambda y=0$ and $x,y\in S_X$ implies that either $x=y$ or $x=-y$, a contradiction. Thus, $\alpha\not=0$, and this proves our claim. Let $\varepsilon_{x,y}\in[0,1)$ such that $\sqrt{1-\alpha^2}=\varepsilon_{x,y}$. Then $\|x+\lambda y\|\geq \sqrt{1-\varepsilon^2_{x,y}}\|x\|$ for all $\lambda\in \mathbb{R}$ and thus $x\perp_{D^{\varepsilon_{x,y}}}y$.
    \end{proof}

Let $X,Y$ be normed linear spaces and $T\in \mathcal{L}(X,Y)$ such that $T$ is $\Delta D^{\eta}$-P for some $\eta\in[0,1)$, where $\Delta\in \{I,R\}$. Then by using symmetry of $\Delta$ orthogonality, we get $x\perp_\Delta y$ implies $y\perp_\Delta x$ for all $x,y\in X$. Thus, if $x\perp_\Delta y$ then $Tx\perp_{D^\eta} Ty$  and $Ty\perp_{D^\eta} Tx$. We now prove a similar result for $\Delta\in \{B, B^\varepsilon, D^\varepsilon\}$, where $\varepsilon\in[0,1)$.
\begin{proposition}\label{prop1}
    Let $\varepsilon\in[0,1)$ and  $\Delta\in\{B,B^\varepsilon, D^\varepsilon\}$. Let $X$ be a finite-dimensional Banach space and $Y$ be a normed linear space. If $T\in S_{\mathcal{L}(X,Y)}$ then the following are equivalent.
    \begin{itemize}
        \item[(i)] $T$ is $\Delta D^{\eta}$-P for some $\eta\in[0,1)$.
        \item[(ii)] $T$ is $\Delta D^{\beta}$-R for some $\beta\in[0,1)$.
    \end{itemize}
\end{proposition}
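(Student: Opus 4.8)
The plan is to show that each of (i) and (ii) is equivalent to $T$ being one-to-one; for $\Delta=B$ this is precisely Theorem~\ref{AOT}, so the real content lies in the cases $\Delta\in\{B^{\varepsilon},D^{\varepsilon}\}$, but the argument below runs uniformly for $\Delta\in\{B,B^{\varepsilon},D^{\varepsilon}\}$. The only nontrivial ingredient is a compactness argument converting ``for each orthogonal pair some exponent works'' into a single $\eta$; everything else is bookkeeping built on the characterizations recalled in the introduction, on Proposition~\ref{independent}, and on Theorem~\ref{AOT}.

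For the implications ``P (resp.\ R) $\Rightarrow$ one-to-one'' I would first note that, since $\varepsilon\geq 0$, the characterizations in the introduction immediately give $x\perp_{B}y\Rightarrow x\perp_{B^{\varepsilon}}y$ and $x\perp_{B}y\Rightarrow x\perp_{D^{\varepsilon}}y$; hence, for every $\Delta\in\{B,B^{\varepsilon},D^{\varepsilon}\}$, a $\Delta D^{\eta}$-P (resp.\ $\Delta D^{\beta}$-R) operator is in particular $BD^{\eta}$-P (resp.\ $BD^{\beta}$-R), so Theorem~\ref{AOT} forces $T$ to be one-to-one. It remains to prove the converses: if $T$ is one-to-one then $T$ is $\Delta D^{\eta}$-P for some $\eta\in[0,1)$ and $\Delta D^{\beta}$-R for some $\beta\in[0,1)$; this will simultaneously give (i)$\Leftrightarrow$(ii).

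So assume $T$ is one-to-one. If $\dim X\leq 1$ there is nothing to prove (no $x,y\in S_{X}$ satisfy $x\perp_{\Delta}y$), so let $\dim X\geq 2$ and set $K=\{(x,y)\in S_{X}\times S_{X}:x\perp_{\Delta}y\}$. For each $\Delta\in\{B,B^{\varepsilon},D^{\varepsilon}\}$ the relation $\perp_{\Delta}$ is, by definition, the intersection over $\lambda\in\mathbb{R}$ of sets cut out by a non-strict inequality between continuous functions of $(x,y)$, so $\perp_{\Delta}$ is closed and $K$ is compact. For $(x,y)\in K$ the vectors $x,y$ are necessarily linearly independent, since for $\|x\|=1$ one checks directly from the definitions that $x\not\perp_{\Delta}x$ and $x\not\perp_{\Delta}(-x)$ for every $\Delta\in\{B,B^{\varepsilon},D^{\varepsilon}\}$ (this is where $\varepsilon<1$ enters). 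As $T$ is one-to-one, $Tx,Ty$ are then linearly independent, and Proposition~\ref{independent} applied to $Tx/\|Tx\|,\,Ty/\|Ty\|\in S_{Y}$ gives
\[
\gamma(x,y):=\frac{1}{\|Tx\|}\,\inf_{\lambda\in\mathbb{R}}\|Tx+\lambda Ty\|\;\in\;(0,1]\qquad\text{for every }(x,y)\in K .
\]
Next, $\gamma$ is lower semicontinuous on $S_{X}\times S_{X}$: because $X$ is finite dimensional and $T$ is one-to-one, $m:=\inf_{z\in S_{X}}\|Tz\|>0$, while $\|Tx\|\leq\|T\|=1$, so every minimizing $\lambda$ for $\inf_{\lambda}\|Tx+\lambda Ty\|$ satisfies $|\lambda|\,m\leq|\lambda|\,\|Ty\|\leq 2\|Tx\|\leq 2$; this uniform bound on the minimizer, together with joint continuity of $(x,y,\lambda)\mapsto\|Tx+\lambda Ty\|$ and positivity and continuity of $x\mapsto\|Tx\|$, yields the lower semicontinuity. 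A lower semicontinuous, strictly positive function on the compact set $K$ attains a positive minimum $\delta$; with $\eta:=\sqrt{1-\delta^{2}}\in[0,1)$ we obtain $Tx\perp_{D^{\eta}}Ty$ for all $(x,y)\in K$, and the homogeneity of $\perp_{\Delta}$ and $\perp_{D^{\eta}}$ (plus the trivial cases $x=0$ or $y=0$) upgrades this to $x\perp_{\Delta}y\Rightarrow Tx\perp_{D^{\eta}}Ty$ for all $x,y\in X$, i.e.\ $T$ is $\Delta D^{\eta}$-P. Replacing $\gamma(x,y)$ by $\|Ty\|^{-1}\inf_{\lambda}\|Ty+\lambda Tx\|$ throughout gives, by the same reasoning, that $T$ is $\Delta D^{\beta}$-R for some $\beta\in[0,1)$.

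I expect the main obstacle to be the topological step of the last paragraph: showing that $K$ is closed and that $\gamma$ is lower semicontinuous, since the infimum over $\lambda$ of continuous functions is a priori only upper semicontinuous. The point that saves the argument is that one-to-oneness of $T$ on a finite-dimensional domain keeps $\|Ty\|$ bounded away from $0$ on $S_{X}$, which confines the minimizing $\lambda$ to a fixed compact interval; all the remaining verifications are immediate from the definitions.
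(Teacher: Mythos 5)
Your proof is correct, but it takes a genuinely different route from the paper's. The paper never passes through injectivity: it proves (i)$\Rightarrow$(ii) directly by applying Proposition~\ref{independent} to each orthogonal pair to get a constant $\varepsilon_{Ty,Tx}$ with $Ty\perp_{D^{\varepsilon_{Ty,Tx}}}Tx$, and then shows $\sup_{x}\sup_{y\in x^{\perp_\Delta}}\varepsilon_{Ty,Tx}<1$ by contradiction, extracting convergent subsequences from $S_X$ and invoking the continuity result \cite[Theorem~2.1]{CKS2}. You instead show both (i) and (ii) are equivalent to $T$ being one-to-one: the forward direction via the (correct) observation that $\perp_B$ refines $\perp_{B^\varepsilon}$ and $\perp_{D^\varepsilon}$, so any $\Delta D^\eta$-P/R operator is in particular $BD^\eta$-P/R and Theorem~\ref{AOT} applies; the converse via a clean compactness argument in which the Dragomir constant $\gamma(x,y)=\|Tx\|^{-1}\inf_\lambda\|Tx+\lambda Ty\|$ is shown to be continuous (your uniform bound $|\lambda|\le 2/m$ on minimizers, available because injectivity on a finite-dimensional domain gives $m=\inf_{S_X}\|Tz\|>0$, is exactly what rescues the semicontinuity) and hence attains a positive minimum on the closed set $K$. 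Your route buys more: it yields a constructive $\eta=\sqrt{1-\delta^2}$, replaces the paper's subsequence-and-contradiction argument with a direct extremal one, and in effect anticipates the stronger Theorem~\ref{one} (equivalence with injectivity) that the paper only establishes later. What it costs is a dependence on the externally cited Theorem~\ref{AOT} for the ``P/R implies injective'' direction — legitimate, since that theorem is quoted from \cite{AOT}, but within the paper's own architecture this proposition is an ingredient of Theorem~\ref{one}, so the paper deliberately avoids leaning on the injectivity characterization here; your version also does not directly relate the preserving and reversing constants in the way the paper's Remark following the proposition does. One small point worth making explicit in a write-up: the verification that $x\not\perp_{B^\varepsilon}\pm x$ for all $\varepsilon\in[0,1)$ (the inequality $(1\mp t)^2\ge 1-2\varepsilon t$ fails for $0<t<2-2\varepsilon$), which you correctly flag as the place where $\varepsilon<1$ is used.
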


\begin{proof}
   (i)$\implies$ (ii) Let $x\in S_X$ and $y\in x^{\perp_\Delta}$. If $Tx=0$ or $Ty=0$ then $Ty\perp_{D^{\gamma}} Tx$ for any $\gamma\in[0,1)$. Let  $Tx, Ty\not=0$. Then (i) implies $Tx\perp_{D^{\eta}} Ty$. Thus, $Tx$ and $Ty$ are linearly independent. Now, the homogeneity property of $D^\eta$ and Proposition~\ref{independent} implies that there exists $\varepsilon_{Ty,Tx}\in[0,1)$ such that  $Ty\perp_{D^{\varepsilon_{Ty,Tx}}} Tx$. For $x\in S_X$ and $y\in x^{\perp_\Delta}$ let $\varepsilon_{Ty,Tx}^*$ be the infimum of all $\varepsilon_{Ty,Tx}$ such that $Ty\perp_{D^{\varepsilon_{Ty,Tx}}} Tx$. We claim that $\sup_{x\in S_X}\sup_{y\in  x^{\perp_\Delta}}\varepsilon_{Ty,Tx}^*<1$. If $\sup_{x\in S_X}\sup_{y\in  x^{\perp_\Delta}}\varepsilon_{Ty,Tx}^*=1$ then there exist $\{x_n\},\{y_n\}\subseteq S_X$ and an increasing sequence $\varepsilon_n\nearrow 1$ such that $x_n\perp_\Delta y_n$, $Ty_n\perp_{D^{\varepsilon_n}}Tx_n$ and $Ty_n\not\perp_{D^{\eta_n}}Tx_n$ for any $\eta_n<\varepsilon_n$. 
   
   Now, using the compactness of $S_X$ we can obtain convergent subsequences of $\{x_n\}$ and  $\{y_n\}$. Without loss of generality, we assume that $x_{n}\longrightarrow x$ and $y_{n}\longrightarrow y$ for some $x,y\in S_X$. Thus, we get $Tx_n\longrightarrow Tx$ and $Ty_n\longrightarrow Ty$. (i) implies that $Tx_n\perp_{D^{\eta}} Ty_n$  for all $n\in \mathbb{N}$. Now, using $Tx_n\longrightarrow Tx$, $Ty_n\longrightarrow Ty$ and continuity of the norm, we get $Tx\perp_{D^{\eta}} Ty$. We claim that $Tx\not=0$ and $Ty\not=0$. If $Tx=0$ then by using $T\in S_{\mathcal{L}(X,Y)}$, we get $Tz\not=0$ for some $z\in S_X$. Clearly, $x$ and $z$ are linearly independent. Let $M=\mbox{span}\{x,z\}$. Then \cite{ratz} implies that there exists $0\not=u\in M$ such that $x\perp_Bu$ and $x+u\perp_B x-u$. Thus, $x\perp_\Delta u$ and $x+u\perp_\Delta x-u$ for $\Delta\in \{B, B^\varepsilon, D^\varepsilon\}$, where $\varepsilon\in[0,1)$. If $u=b z$ for some $b\not=0$ then $x+u\perp_\Delta x-u$ and (i) imply $b Tz\perp_{D^\eta} -b Tz$, a contradiction. Thus, $u=a x+b z$ for some scalars $a,b\not=0$. Again, $x+u\perp_\Delta x-u$ and (i) imply $b Tz\perp_{D^\eta} -b Tz$, a contradiction. This shows that $Tx\not=0$. Similarly, we can show that $Ty\not=0$.
   
   Thus, (i) implies that $Tx$, $Ty$ are linearly independent. By the homogeneity property of $D^\eta$ and Proposition~\ref{independent} there exists $\delta_1\in[0,1)$ such that $Ty\perp_{D^{\delta_1}}Tx$. Now, using the homogeneity property of $D^\eta$  and \cite[Theorem 2.1]{CKS2} we can find $k\in\mathbb{N}$ and $\delta_2\in[0,1)$ such that $Ty_m\perp_{D^{\delta_2} }Tx_m$ for all $m\geq k$, a contradiction. Thus, (ii) follows.

   (ii)$\implies$ (i) follows by using the similar arguments used to prove (i)$\implies$ (ii).
\end{proof}

It follows from Proposition~\ref{prop1} that for a finite-dimensional Banach space, the Dragomir orthogonality is symmetric in the following sense.

\begin{corollary}
    Let $X$ be a finite-dimensional Banach space and $\varepsilon\in[0,1)$. Then there exists $\beta\in[0,1)$ such that whenever $x,y\in S_X$ and  $x\perp_{D^\varepsilon} y$ implies $y\perp_{D^\beta}x$.
\end{corollary}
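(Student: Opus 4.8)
The plan is to deduce this corollary directly from Proposition~\ref{prop1} by applying it to a suitable operator. First I would reduce to the case where the ambient codomain is $X$ itself: since the statement only involves $x,y\in S_X$ and the $D^\varepsilon$ orthogonality among them, it suffices to work with $T=\mathrm{Id}_X$, the identity operator on $X$, viewed as an element of $\mathcal{L}(X,X)$ (after normalizing, $T/\|T\|=\mathrm{Id}_X$ since $\|\mathrm{Id}_X\|=1$, so $T\in S_{\mathcal{L}(X,X)}$). The hypothesis ``$x\perp_{D^\varepsilon}y$ implies $y\perp_{D^\beta}x$ for all $x,y\in S_X$'' is then exactly the assertion that $\mathrm{Id}_X$ is $D^\varepsilon D^\beta$-R for a suitable $\beta$, in the sense of Definition~\ref{def1} with $\Delta=D^\varepsilon$.

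Next I would check that $\mathrm{Id}_X$ is $D^\varepsilon D^\eta$-P for some $\eta\in[0,1)$, which is the hypothesis (i) of Proposition~\ref{prop1} with $\Delta=D^\varepsilon$. This is immediate: if $x\perp_{D^\varepsilon}y$ then trivially $\mathrm{Id}_X\,x = x\perp_{D^\varepsilon}y = \mathrm{Id}_X\,y$, so in fact $\mathrm{Id}_X$ is $D^\varepsilon D^\varepsilon$-P, i.e. one may take $\eta=\varepsilon$. Since $X$ is finite-dimensional, $Y:=X$ is a normed linear space, and $T=\mathrm{Id}_X\in S_{\mathcal{L}(X,X)}$, all hypotheses of Proposition~\ref{prop1} are met with $\Delta=D^\varepsilon\in\{B,B^\varepsilon,D^\varepsilon\}$. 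Applying the implication (i)$\implies$(ii) of that proposition yields a $\beta\in[0,1)$ such that $\mathrm{Id}_X$ is $D^\varepsilon D^\beta$-R, which unwinds to precisely the claimed statement: whenever $x,y\in S_X$ and $x\perp_{D^\varepsilon}y$, we have $y\perp_{D^\beta}x$.

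There is really no substantial obstacle here; the only point requiring a word of care is the matching of definitions. One should note that Definition~\ref{def1} for $\Delta=UI$ (and, by the same convention, for the Isosceles/Dragomir notions restricted to the sphere) requires restricting $x,y$ to $S_X$, and that Proposition~\ref{prop1} is stated for $T\in S_{\mathcal{L}(X,Y)}$ with $x\in S_X$, $y\in x^{\perp_\Delta}$, so the hypotheses line up exactly with the corollary's phrasing in terms of unit vectors. I would also remark that the same argument gives, more generally, symmetry of $D^\varepsilon$-orthogonality (up to enlarging $\varepsilon$) for unit vectors in any finite-dimensional space, and that an explicit value of $\beta$ can be read off from the Remark following Proposition~\ref{prop1}, namely $\beta=\sup_{x\in S_X}\sup_{y\in x^{\perp_{D^\varepsilon}}}\varepsilon_{x,y}$ where $\sqrt{1-\varepsilon_{x,y}^2}=\inf_{\lambda\in\mathbb{R}}\|x+\lambda y\|$, since for $T=\mathrm{Id}_X$ one has $Tx=x$ and $Ty=y$.
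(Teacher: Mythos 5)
Your argument is correct and is precisely the intended deduction: the paper states this corollary without proof as an immediate consequence of Proposition~\ref{prop1}, and instantiating that proposition with $T=\mathrm{Id}_X\in S_{\mathcal{L}(X,X)}$ and $\Delta=D^\varepsilon$ (noting $\mathrm{Id}_X$ is trivially $D^\varepsilon D^\varepsilon$-P) is exactly how it follows. Your closing remark that an explicit $\beta$ can be extracted from the Remark after Proposition~\ref{prop1} is a nice, correct addition but not needed for the statement.
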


We now prove that any injective operator defined from a finite-dimensional Banach space to a normed linear space is $\Delta D^{\eta}$-P and $\Delta D^{\beta}$-R for some $\eta,\beta\in[0,1)$, where $\Delta\in\{UI,R, B,B^\varepsilon, D^\varepsilon\}$ and $\varepsilon\in[0,1)$.

\begin{proposition}\label{PR}
    Let $\varepsilon\in[0,1)$ and  $\Delta\in\{UI,R, B,B^\varepsilon, D^\varepsilon\}$. Let $X$ be a finite-dimensional Banach space and $Y$ be normed linear space. If $T\in S_{\mathcal{L}(X,Y)}$ is one-to-one, then the following results hold.
    \begin{itemize}
        \item[(i)] $T$ is $\Delta D^{\eta}$-P for some $\eta\in[0,1)$.
        \item[(ii)] $T$ is $\Delta D^{\beta}$-R for some $\beta\in[0,1)$.
    \end{itemize}
\end{proposition}
\begin{proof}
     (i) Let $x\in S_X$ and $y\in x^{\perp_\Delta}$. Then $x,y$ are linearly independent. Thus, $Tx$ and $Ty$ are linearly independent. Using the homogeneity property of $D^\eta$ and Proposition~\ref{independent} there exists $\varepsilon_{Ty,Tx}\in[0,1)$ such that $Ty\perp_{D^{\varepsilon_{Ty,Tx}}} Tx$. For $x\in S_X$ and $y\in x^{\perp_\Delta}$ let $\varepsilon_{Ty,Tx}^*$ be the infimum of all $\varepsilon_{Ty,Tx}$ such that $Ty\perp_{D^{\varepsilon_{Ty,Tx}}} Tx$. Now, by using the arguments similar to Proposition~\ref{prop1}, we can show that $\sup_{x\in S_X}\sup_{y\in x^{\perp_\Delta}}\varepsilon_{Ty,Tx}^*<1$. Thus, (i) follows.

    (ii) follows using the similar arguments used to prove (i).
\end{proof}

Let $X,Y$ be normed linear spaces. Then an operator $T\in S_{\mathcal{L}(X,Y)}$ is said to be bounded from below if there exists $\delta>0$ such that $\delta \|x\|\leq \|Tx\|$ for all $x\in X$. 
We now show that any $\Delta D^\eta$-P linear operator defined from a normed linear space to another normed linear space is always a continuous and bounded from below operator, where $\Delta\in\{B,B^\varepsilon, D^\varepsilon\}$ and $\eta,\varepsilon\in [0,1)$. To prove this result, we will use some ideas from \cite[Lemma 3.2]{MT}.
        \begin{lemma}\label{bounded below}
    Let $\varepsilon,\eta\in[0,1)$ and  $\Delta\in\{B,B^\varepsilon, D^\varepsilon\}$. Let $X, Y$ be normed linear spaces and $T: X\longrightarrow Y$ be a non-zero linear operator.  If $T$ is $\Delta D^{\eta}$-P, then $T$ is continuous and bounded from below.
     \end{lemma}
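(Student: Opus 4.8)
The plan is to reduce everything to Birkhoff--James orthogonality, extract one scalar inequality from the hypothesis, and then apply it to suitably perturbed vectors.

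First reduce to $\Delta=B$. By the characterizations recalled in the introduction, $x\perp_B y$ implies $x\perp_{B^\varepsilon}y$ and $x\perp_{D^\varepsilon}y$: if $f\in\mathcal J(x)$ satisfies $f(y)=0$, then $|f(y)|=0\le\varepsilon\|y\|$ and moreover $f\in S_{X^*}$ with $|f(x)|=\|x\|\ge\sqrt{1-\varepsilon^2}\|x\|$. Hence every $\Delta D^\eta$-P operator ($\Delta\in\{B,B^\varepsilon,D^\varepsilon\}$) is $BD^\eta$-P, and I may assume $x\perp_B y\Rightarrow Tx\perp_{D^\eta}Ty$. Write $c=\sqrt{1-\eta^2}\in(0,1]$. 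For $0\neq x\in X$, $y\in X$, $f\in\mathcal J(x)$, the vector $z:=y-\tfrac{f(y)}{\|x\|}\,x$ is killed by $f$, so $x\perp_B z$, hence $Tx\perp_{D^\eta}Tz$, i.e.\ $\|Tx+\lambda Tz\|\ge c\|Tx\|$ for all $\lambda$. When $f(y)\neq0$, taking $\lambda=\|x\|/f(y)$ gives $Tx+\lambda Tz=\tfrac{\|x\|}{f(y)}Ty$, whence
\[
\|Ty\|\ \ge\ c\,\frac{|f(y)|}{\|x\|}\,\|Tx\| \qquad(\star)
\]
(the case $f(y)=0$ being trivial). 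Both sides of $(\star)$ are positively homogeneous of degree one in $T$, so in the "same direction" it is a tautology; the idea is to feed it auxiliary vectors that make the right-hand side large.

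For the lower bound, fix (using $T\neq0$) a vector $x_0\in S_X$ with $Tx_0\neq0$ and set $K_0:=\|Tx_0\|>0$. Given $w\in S_X$, let $x':=w+\tfrac14 x_0\ (\neq0)$. Each $h\in\mathcal J(x')$ satisfies $h(w)=\|x'\|-\tfrac14 h(x_0)\ge\tfrac34-\tfrac14=\tfrac12$ and $\|x'\|\le\tfrac54$, so $(\star)$ with $x=x'$, $y=w$, $f=h$ gives $\|Tw\|\ge\tfrac{2c}{5}\,\|Tx'\|$. Since also $\|Tx'\|=\|Tw+\tfrac14 Tx_0\|\ge\tfrac14 K_0-\|Tw\|$, combining and solving for $\|Tw\|$ yields $\|Tw\|\ge\tfrac{cK_0}{10+4c}>0$; by homogeneity $\|Tx\|\ge\tfrac{cK_0}{10+4c}\|x\|$ for all $x$, so $T$ is bounded below (hence injective).

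For continuity, argue symmetrically: for $w\in S_X$ put $x'':=\tfrac14 w+x_0\ (\neq0)$. Each $h\in\mathcal J(x'')$ satisfies $h(x_0)=\|x''\|-\tfrac14 h(w)\ge\tfrac12$ and $\|x''\|\le\tfrac54$, so $(\star)$ with $x=x''$, $y=x_0$ now bounds $\|Tx''\|$ from above by $\tfrac{5K_0}{2c}$. Since $\|Tx''\|=\|\tfrac14 Tw+Tx_0\|\ge\tfrac14\|Tw\|-K_0$, this forces $\|Tw\|\le 4K_0+\tfrac{10K_0}{c}$ for every $w\in S_X$, i.e.\ $T$ is bounded. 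The only delicate step is the choice of the mixing vectors $x'=w+\tfrac14 x_0$ and $x''=\tfrac14 w+x_0$: the coefficient must be small enough that the supporting functionals at $x'$ (resp.\ $x''$) still see $w$ (resp.\ $x_0$) with a universal lower bound, and at the same time the triangle inequality for $\|Tx'\|$ (resp.\ $\|Tx''\|$) must point so that $(\star)$ becomes a genuine constraint; after that the two conclusions follow by elementary algebra, with no compactness or dimension assumption on $X$ or $Y$.
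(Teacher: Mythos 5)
Your proof is correct and takes essentially the same route as the paper's: both reduce to $\Delta=B$, use a supporting functional $f$ of a mixed vector $u$ to produce the Birkhoff-orthogonal pair $u\perp_B v-\frac{f(v)}{\|u\|}u$, and then choose $\lambda$ in the resulting $D^\eta$ inequality so that the $Tu$ term cancels (the idea of \cite[Lemma 3.2]{MT}, which the paper also invokes). The only cosmetic difference is that the paper works with $3x+y$ for arbitrary $x,y\in S_X$ and extracts the single two-sided inequality $\|Tx\|\geq \frac{\sqrt{1-\eta^2}}{12+3\sqrt{1-\eta^2}}\|Ty\|$, whereas you anchor at a fixed $x_0$ with $Tx_0\neq 0$ and apply your inequality $(\star)$ twice, with the mixings $w+\frac14 x_0$ and $\frac14 w+x_0$.
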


     \begin{proof}
         Let $x,y\in S_X$. Then $3x+y\not\perp_By$. Let $f\in \mathcal{J}(3x+y)$ and $\alpha=\dfrac{f(3x)}{\|3x+y\|}$. Then $3x+y\perp_B3x-\alpha(3x+y)$. Observe that $\|3x+y\|\leq 4$, $\|3x+y\|\geq |\|3x\|-\|y\||=2$, $|f(3x)|=|f(3x+y-y)|=|\|3x+y\|-f(y)|\geq 1$ and $|f(3x)|\leq 3$. Now, by using  $2\leq\|3x+y\|\leq 4$ and $1\leq |f(3x)|\leq 3$, we get $\frac{1}{4}\leq |\alpha| \leq \frac{3}{2}$. 
         
         Since $3x+y\perp_B3x-\alpha(3x+y)$, we get   $3x+y\perp_\Delta 3x-\alpha(3x+y)$ for $\Delta\in\{B,B^\varepsilon, D^\varepsilon\}$. Now, by using $T$ is $\Delta D^\eta$-P for $\Delta\in\{B,B^\varepsilon, D^\varepsilon\}$, we get
         \begin{align*}
      \|T(3x+y)+\lambda T(3x-\alpha(3x+y))\|&\geq \sqrt{1-\eta^2}\|T(3x+y)\|. 
    \end{align*}
        By choosing $\lambda=\frac{1}{\alpha}$, we get
        \begin{align*}
      \frac{3}{|\alpha|}\|Tx\|&\geq \sqrt{1-\eta^2}\|T(3x+y)\|. 
    \end{align*}
       Thus, 
        \begin{align*}
        12\|Tx\|\geq \frac{3}{|\alpha|}\|Tx\|&\geq \sqrt{1-\eta^2}\|T(3x+ y)\|\geq \sqrt{1-\eta^2}(\|Ty\|-3\|Tx\|).
         \end{align*}
        This shows 
         \begin{align}\label{eq1}
       \|Tx\|\geq \frac{\sqrt{1-\eta^2}}{12+3\sqrt{1-\eta^2}}\|Ty\|.
         \end{align}

         Since $x,y\in S_X$ are arbitrary, Eqn.~(\ref{eq1}) implies $T$ is bounded.

         By taking supremum over all $y\in S_X$ on the right-hand side of Eqn.~(\ref{eq1}), we get 
         \begin{align*}
       \|Tz\|\geq \frac{\sqrt{1-\eta^2}}{12+3\sqrt{1-\eta^2}}\|T\|\|z\|
         \end{align*}
         for all $z\in X$.
         This proves the result.
     \end{proof}

It follows from Lemma~\ref{bounded below} that if $X,Y$ are normed linear spaces and $0\not=T\in\mathcal{L}(X,Y)$ is $\Delta D^\eta$-P operator then $T$ is an injective operator, where $\Delta\in\{B,B^\varepsilon, D^\varepsilon\}$ and $\varepsilon,\eta\in[0,1)$. We now prove a similar result for $\Delta\in\{I, UI\}$. 

   \begin{lemma}\label{r3}
       Let $\eta\in[0,1)$ and $\Delta\in\{I,UI\}$. Let $X$ and $Y$ be normed linear spaces. Let $T\in S_{\mathcal{L}(X,Y)}$ be such that $T$ is $\Delta D^\eta$-P. Then $T$ is one-to-one. 
   \end{lemma}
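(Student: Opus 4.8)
The plan is to argue by contradiction. Suppose $T$ is not one-to-one. Then there is $z\in\ker T$ with $\|z\|=1$, and since $\|T\|=1$ there is $w\in S_X\setminus\ker T$; the vectors $z,w$ are linearly independent, so $V:=\mathrm{span}\{z,w\}$ is two-dimensional and for $v=az+bw\in V$ we have $Tv=b\,Tw$. Thus a vector of $V$ has nonzero image under $T$ exactly when it is not a scalar multiple of $z$, and any two such vectors have (nonzero) parallel images. The mechanism of the proof is that Dragomir orthogonality behaves badly between nonzero parallel vectors: if $b=c\,a$ with $a\neq 0$ and $c\neq 0$, then taking $\lambda=-1/c$ gives $\|a+\lambda b\|=0<\sqrt{1-\eta^2}\|a\|$, so $a\not\perp_{D^\eta}b$. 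Hence it suffices to produce $u,m\in V$, neither a scalar multiple of $z$, with $u\perp_I m$ (and both of norm $1$, when $\Delta=UI$): then $Tu,Tm$ are nonzero and parallel, while $\Delta D^\eta$-P forces $Tu\perp_{D^\eta}Tm$, a contradiction.

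For $\Delta=I$ I would first pick a suitable ``generic'' vector. The function $\psi(\lambda)=\|w+\lambda z\|$ is convex with $\psi(\lambda)\to\infty$ as $|\lambda|\to\infty$, hence it cannot be $2$-periodic, so there is $c$ with $\psi(c+1)\neq\psi(c-1)$; equivalently $u:=w+cz$ satisfies $u\not\perp_I z$, and also $u\notin\mathbb{R}z$ and $Tu=Tw\neq 0$. Next I would use the standard existence fact for isosceles orthogonality — an intermediate value argument for $t\mapsto\|(1+t)u+z\|-\|(1-t)u-z\|$, whose limits are $\pm 2\|u\|$ as $t\to\pm\infty$ — to obtain $t_0$ with $u\perp_I(t_0u+z)$. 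Since $u\not\perp_I z$, necessarily $t_0\neq 0$, so $m:=t_0u+z$ is not a multiple of $z$ and $Tm=t_0\,Tu$ is a nonzero multiple of $Tu$. Now $u\perp_I m$ and $\Delta D^\eta$-P give $Tu\perp_{D^\eta}Tm$, contradicting the parallel-vector observation.

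For $\Delta=UI$ the same idea is used, but the two witnesses must lie on $S_V=V\cap S_X$ (a topological circle), so rescaling is not available. Since $\|z+z\|=2\neq 0=\|z-z\|$, the point $z$ does \emph{not} lie in the closed set $B=\{v\in S_V:\|v+z\|=\|v-z\|\}$, so $S_V\setminus B$ is a nonempty relatively open subset of a circle, hence infinite; choose $u\in S_V\setminus B$ with $u\neq\pm z$, so that $u\not\perp_I z$ and $Tu\neq 0$. The map $v\mapsto\|u+v\|-\|u-v\|$ is continuous on the connected set $S_V$ and takes the value $2$ at $v=u$ and $-2$ at $v=-u$, so it vanishes at some $m\in S_V$; then $u\perp_I m$, and $u\not\perp_I z$ forces $m\neq\pm z$, so $Tm\neq 0$ and $Tu,Tm$ are parallel. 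Since $u,m\in S_X$, we have $u\perp_{UI}m$, and $\Delta D^\eta$-P yields the same contradiction.

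The routine ingredients — convexity and coercivity of $\psi$, the intermediate-value/asymptotics computation, the fact that the unit sphere of a $2$-dimensional normed space is a circle — are standard. The step that requires a little care is the selection of the ``generic'' vector $u\in V$ with $Tu\neq 0$ and $u\not\perp_I z$: because $\perp_I$ is not homogeneous and isosceles bisectors can have nonempty interior, one cannot simply normalize or take an arbitrary multiple of $w$. The periodicity obstruction for $\psi$ (when $\Delta=I$) and the observation $z\notin B$ (when $\Delta=UI$) are precisely what guarantee such a $u$ exists, so this is the point I would take most care over.
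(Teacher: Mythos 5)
Your proof is correct. The core mechanism is the same as the paper's: two nonzero parallel vectors can never be $D^\eta$-orthogonal, so it suffices to exhibit an isosceles-orthogonal pair in $\mathrm{span}\{z,w\}$ whose images under $T$ are both nonzero (hence parallel multiples of $Tw$). For $\Delta=UI$ your argument is essentially the paper's: the paper invokes Gao--Lau's lemma twice (once to get a unit vector $u$ with $\|u-y\|=\tfrac12$, hence $u\not\perp_{UI}y$, and once for the existence of a $\perp_{UI}$-partner $v$ of $u$ on the sphere), where you instead prove both facts directly via the topology of the unit circle of the two-dimensional subspace and an intermediate-value argument; the content is the same, yours is just self-contained. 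For $\Delta=I$ the routes genuinely diverge: the paper simply observes that $\tfrac{x+y}{2}\perp_I\tfrac{x-y}{2}$ (both norms equal $1$ when $x,y\in S_X$) and that both vectors map to $\tfrac{Tx}{2}\neq 0$, which kills the case in one line; your argument via non-periodicity of the convex coercive function $\psi(\lambda)=\|w+\lambda z\|$ and the existence property $u\perp_I(t_0u+z)$ is more elaborate but valid, and your care about the non-homogeneity of $\perp_I$ (you cannot just rescale a generic vector) is well placed --- it is exactly the issue the paper's explicit pair sidesteps.
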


   \begin{proof}
   Since $T\in S_{\mathcal{L}(X,Y)}$, we can find an $x\in S_X$ such that $Tx\not=0$. If $T$ is not one-to-one, then there exists $y\in S_X$ such that $Ty=0$. Clearly, $x$ and $y$ are linearly independent. 

    We first consider $\Delta=I$. Clearly, $\dfrac{x+y}{2}\perp_\Delta \dfrac{x-y}{2}$ and this gives $T(\frac{x+y}{2})\perp_{D^\eta} T (\frac{x-y}{2})$. Thus, $Tx\perp_{D^\eta} Tx$, a contradiction. This shows that $T$ is one-to-one.

   Now, we consider $\Delta=UI$. Let $M=span\{x,y\}$ and $u\in S_{M}$ such that $\|u-y\|=\frac{1}{2}$. Then it follows from \cite[Lemma 2.2]{GL} that $u\not\perp_\Delta y$. Also, $u$ and $y$ are linearly independent. Let $u=\alpha_1 x+\beta_1 y$, where $\alpha_1,\beta_1\in\mathbb{R}$ and $\alpha_1\not=0$.

   By using \cite[Lemma 2.2]{GL} we can find a $v\in S_{M}$ such the $u\perp_\Delta v$. Clearly, it follows that $v$ and $y$ are linearly independent. Let $v=\alpha_2 x+\beta_2 y$, where $\alpha_2,\beta_2\in\mathbb{R}$ and $\alpha_2\not=0$. Now, by given assumption on $T$, we get $Tu\perp_{D^\eta} Tv$. Since $Tu,Tv\not=0$, we get $Tu$ and $Tv$ are linearly independent. But $Ty=0$ implies that $Tu=\alpha_1 Tx$ and $Tv=\alpha_2 Tx$, a contradiction. Thus, $T$ is one-to-one. 
   \end{proof}

   \begin{remark}
   Let $x=\left(\dfrac{3}{4},-\dfrac{1}{4}\right)\in \ell_1^2$ and $y=(y_1,y_2)\in S_{\ell_1^2}$ such that $x\perp_R y$. Since $x\perp_R y$ implies $x\perp_B y$, so we get $y_1-y_2=0$. Thus, any such $y\in S_{\ell_1^2}$ is either $y=\left(\dfrac{1}{2},\dfrac{1}{2}\right)$ or  $y=\left(-\dfrac{1}{2},-\dfrac{1}{2}\right)$. But for any of these values of $y$, $\|x+y\|\not=\|x-y\|$. Thus, for $x=\left(\dfrac{3}{4},-\dfrac{1}{4}\right)\in \ell_1^2$ there does not exists any $0\not=y\in\ell_1^2$ such that $x\perp_R y$. This shows that in general, Roberts orthogonality fails to have the existence property. The given proof of Lemma~\ref{r3} uses the existence property of the considered orthogonality relations, thus arguments of Lemma~\ref{r3} can not be used for $\Delta=R$.
   \end{remark}

It follows from Theorem~\ref{AOT} that $T\in S_{\mathcal{L}(X,Y)}$ is $BD^\eta$-P/$BD^\eta$-R if and only if $T$ is injective, where $X$ is a finite-dimensional Banach space, $Y$ is a normed linear space and $\eta\in[0,1)$. In the following result we show a similar result for any $\Delta D^\eta$-P operator, where $\Delta\in\{UI,B,B^\varepsilon, D^\varepsilon\}$ and $\eta,\varepsilon\in[0,1)$. This result extends Theorem~\ref{AOT}.

\begin{theorem}\label{one}
Let $\varepsilon\in[0,1)$ and $\Delta\in\{UI,B,B^\varepsilon, D^\varepsilon\}$. Let $X$ be a finite-dimensional Banach space and $Y$ be a normed linear space. Let $T\in S_{\mathcal{L}(X,Y)}$. Then the following are equivalent.
    \begin{itemize}
        \item[(i)] $T$ is one-to-one.
        \item [(ii)]$T$ is $\Delta D^{\beta}$-R for some $\beta\in[0,1)$.
        \item[(iii)] $T$ is $\Delta D^{\eta}$-P for some $\eta\in[0,1)$.
    \end{itemize}
     \end{theorem}
\begin{proof}
(i) implies (ii) follows from the Proposition~\ref{PR}. For $\Delta\in\{B,B^\varepsilon, D^\varepsilon\}$ (ii) implies (iii) follows from the Proposition~\ref{prop1} and for $\Delta=UI$ (ii) implies (iii) follows from the symmetry of $\Delta$.
Lemmas~\ref{bounded below} and \ref{r3} show that (iii) implies (i).
\end{proof}

We now recall the definition of an $\varepsilon$-isometry. Let $X,Y$ be normed linear spaces. A linear map $T:X\longrightarrow Y$ is said to be an $\varepsilon$-isometry if $(1-\delta_1(\varepsilon))\|x\|\leq\|Tx\|\leq  (1+\delta_2(\varepsilon))\|x\|$, where $\lim\limits_{\varepsilon\longrightarrow 0}\delta_1(\varepsilon)=0$ and $\lim\limits_{\varepsilon\longrightarrow 0}\delta_2(\varepsilon)= 0$. Let $\beta,\eta\in[0,1)$ and $\Delta\in\{UI,B,B^\beta, D^\beta\}$. Let $X$, $Y$ be $n$-dimensional Banach spaces. The following result shows that an operator $T\in S_{\mathcal{L}(X,Y)}$ is $\Delta D^{\eta}$-P/$\Delta D^\eta$-R if and only if $T$ is an $\varepsilon$-isometry. 

\begin{corollary}\label{almost isometry}
   Let $\beta\in[0,1)$ and $\Delta\in\{UI,B,B^\beta, D^\beta\}$.  Let $X$ and $Y$ be $n$-dimensional Banach spaces. Let $T\in S_{\mathcal{L}(X,Y)}.$ Then the following are equivalent:
   \begin{itemize}
       \item [(i)] $T$ is an $\varepsilon$-isometry.
       \item [(ii)]$T$ is $\Delta D^{\delta}$-R for some $\delta\in[0,1)$. 
       \item [(iii)] $T$ is $\Delta D^{\eta}$-P for some $\eta\in[0,1)$.
    \end{itemize}
\end{corollary}
\begin{proof}
    (i) $\implies$ (ii) and (ii) $\implies$ (iii) follows from Theorem~\ref{one}. 
    
    (iii)$\implies$ (i) It follows from Theorem~\ref{one} that $T$ is one-to-one operator. Thus, by using $X$ and $Y$ are $n$-dimensional Banach spaces, we get $T$ is an invertible operator. Now, $\|T\|=1$, implies $\|T^{-1}\|\geq 1$. Also,
    $${\|T^{-1}\|}^{-1}\|x\|\leq \|Tx\|\leq \|x\|$$
    for all $x\in X$.
    Let  ${\|T^{-1}\|}^{-1}=1-\varepsilon$ for some  $\varepsilon\in[0,1)$. Then,  $$(1-\varepsilon)\|x\|\leq \|Tx\|\leq \|x\|$$ for all $x\in X$ and thus (i) follows.

\end{proof}

We now recall the following definition and result that helps us to reformulate Corollary~\ref{almost isometry} in terms of maps which are close to isometries. Let $X,Y$ be normed linear spaces. A linear map $T:X\longrightarrow Y $ is said to be close to an isometry if $\|T-I\|\leq \delta(\varepsilon)$, where $\lim\limits_{\varepsilon\longrightarrow 0}\delta(\varepsilon)= 0$ and $I:X\longrightarrow Y$ is an isometry. Pair $(X,Y)$  is said to have the stability of linear isometries property if there exists a function $\delta:[0,1)\longrightarrow \mathbb{R}_+$ with $\delta(\varepsilon)\longrightarrow 0$ as $\varepsilon\longrightarrow 0^+$ such that for any $\varepsilon$-isometry $T:X\longrightarrow Y$ there exists an isometry $I:X\longrightarrow Y$ such that $\|T-I\|\leq \delta(\varepsilon)$.

By using the result that if $X$ and $Y$ are finite-dimensional Banach spaces then the pair $(X,Y)$ has the stability of linear isometries property (see \cite{Ding} for details) and linear maps close to isometries are $\varepsilon$-isometries, we can reformulate Corollary~\ref{almost isometry} in the following way.
\begin{corollary}
   Let $\beta\in[0,1)$ and $\Delta\in\{UI,B,B^\beta, D^\beta\}$.  Let $X$ and $Y$ be $n$-dimensional Banach spaces.  Let $T\in S_{\mathcal{L}(X,Y)}.$ Then the following are equivalent:
   \begin{itemize}
       \item [(i)] $T$ is close to an isometry.
       \item [(ii)]$T$ is $\Delta D^{\delta}$-R for some $\delta\in[0,1)$. 
       \item [(iii)] $T$ is $\Delta D^{\eta}$-P for some $\eta\in[0,1)$.
    \end{itemize}
\end{corollary}

We now study a result similar to Corollary~\ref{almost isometry} by removing the assumptions that $X$ and $Y$ are $n$-dimensional Banach spaces. Let $\beta\in[0,1)$ and $X,Y$ be normed linear spaces. We now show that scalar multiple of $\varepsilon$-isometries defined from $X$ to $Y$ are always $D^\beta D^\eta$-P, $B^\beta D^\eta$-P, $R D^\eta$-P and $R D^\eta$-R for some $\eta\in[0,1)$.
\begin{theorem}\label{isometry}
    Let $X,Y$ be normed linear spaces and $\beta\in[0,1)$. Let $0
\not=T\in \mathcal{L}(X,Y)$ be a scalar multiple of an $\varepsilon$-isometry. Then
    \begin{itemize}
        \item[(i)] $T$ is $D^\beta D^\eta$-P, where $\eta=\sqrt{1-\left(\dfrac{1-\delta_1(\varepsilon)}{1+\delta_2(\varepsilon)}\right)^2(1-\beta^2)}$.
        \item[(ii)] $T$ is $B^\beta D^\eta$-P for $\eta=\sqrt{1-\left(\dfrac{1-\delta_1(\varepsilon)}{1+\delta_2(\varepsilon)}\right)^2{\left(\dfrac{1-\beta}{1+\beta}\right)^2}}$.
        \item[(iii)] $T$ is $R D^\eta$-P and $R D^\eta$-R for $\eta=\sqrt{1-\left(\dfrac{1-\delta_1(\varepsilon)}{1+\delta_2(\varepsilon)}\right)^2}$.
    \end{itemize}
\end{theorem}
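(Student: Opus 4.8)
The plan is to reduce all three parts to a single mechanism: if $x\perp_\Delta y$ forces a \emph{uniform} lower bound $\|x+\lambda y\|\ge c_\Delta\|x\|$ for every $\lambda\in\mathbb{R}$, then an $\varepsilon$-isometry transports this bound into a Dragomir orthogonality, with a loss governed by the ratio $\frac{1-\delta_1(\varepsilon)}{1+\delta_2(\varepsilon)}$. Since $\perp_{D^\eta}$, $\perp_{D^\beta}$, $\perp_{B^\beta}$ and $\perp_R$ are all homogeneous and $T=cS$ with $c\neq 0$ and $S$ an $\varepsilon$-isometry, the relation $Tx\perp_{D^\eta}Ty$ is equivalent to $Sx\perp_{D^\eta}Sy$; hence I may assume from the outset that $T$ itself is an $\varepsilon$-isometry, so that $a\|z\|\le\|Tz\|\le b\|z\|$ for all $z\in X$, where $a=1-\delta_1(\varepsilon)$ and $b=1+\delta_2(\varepsilon)$ (here $b\ge 1$, and $a>0$ in the regime in which the stated $\eta$ is meaningful).

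Next I would record the three uniform lower bounds. For $\perp_R$: Roberts orthogonality implies Birkhoff-James orthogonality, since by convexity $\|x\|=\|\tfrac12(x+\lambda y)+\tfrac12(x-\lambda y)\|\le\|x+\lambda y\|$; thus $x\perp_R y$ gives $\|x+\lambda y\|\ge\|x\|$ for all $\lambda$, i.e. $c_R=1$. For $\perp_{D^\beta}$ the inequality $\|x+\lambda y\|\ge\sqrt{1-\beta^2}\,\|x\|$ is the definition, so $c_{D^\beta}=\sqrt{1-\beta^2}$. For $\perp_{B^\beta}$ with $\beta\in[0,\tfrac12)$: combine the defining inequality $\|x+\lambda y\|^2\ge\|x\|^2-2\beta\|x\|\,\|\lambda y\|$ with the triangle inequality $\|x+\lambda y\|\ge|\,\|x\|-\|\lambda y\|\,|$; normalizing $\|x\|=1$ and putting $s=\|\lambda y\|$, one checks $\max\{1-2\beta s,\,(s-1)^2\}\ge(1-2\beta)^2$ for all $s\ge 0$ (indeed $1-2\beta s\ge(1-2\beta)^2$ when $s\le 2(1-\beta)$, while $(s-1)^2\ge(1-2\beta)^2$ when $s\ge 2(1-\beta)$), so $\|x+\lambda y\|\ge(1-2\beta)\|x\|$ and $c_{B^\beta}=1-2\beta$.

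Granting these, each conclusion is a two-line estimate: if $x\perp_\Delta y$ then for every $\mu\in\mathbb{R}$,
\begin{align*}
\|Tx+\mu Ty\|=\|T(x+\mu y)\|\ge a\,\|x+\mu y\|\ge a\,c_\Delta\|x\|\ge \frac{a}{b}\,c_\Delta\|Tx\|,
\end{align*}
the last step using $\|x\|\ge\|Tx\|/b$. This says precisely $Tx\perp_{D^\eta}Ty$ with $\sqrt{1-\eta^2}=\frac{a}{b}\,c_\Delta$, which unwinds to the value of $\eta$ stated in (i) (with $c_\Delta=\sqrt{1-\beta^2}$), in (ii) (with $c_\Delta=1-2\beta$), and in (iii) (with $c_\Delta=1$). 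For the reversing assertion in (iii) I would use the symmetry of Roberts orthogonality: $x\perp_R y$ implies $y\perp_R x$, so the same estimate with $x$ and $y$ interchanged gives $Ty\perp_{D^\eta}Tx$ with the same $\eta$.

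I expect the only genuinely non-routine step to be the $\perp_{B^\beta}$ lower bound, where the quadratic inequality from the definition of $B^\varepsilon$-orthogonality must be played against the triangle inequality and the parameter $s$ handled by cases; this is also where the hypothesis $\beta<\tfrac12$ is forced, since for $\beta\ge\tfrac12$ the constant $1-2\beta$ degenerates to a non-positive number. The homogeneous reduction, the Roberts-to-Birkhoff-James implication, and the final transport estimate are all elementary.
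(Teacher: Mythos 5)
Your proof is correct and follows essentially the same route as the paper's: factor out the scalar, transport the uniform lower bound $\|x+\lambda y\|\ge c_\Delta\|x\|$ through the two-sided $\varepsilon$-isometry estimate, and read off $\sqrt{1-\eta^2}=\tfrac{1-\delta_1(\varepsilon)}{1+\delta_2(\varepsilon)}\,c_\Delta$ with $c_\Delta=\sqrt{1-\beta^2}$, $1-2\beta$, $1$ in the three cases. The only (harmless) divergence is that in (ii) the paper simply cites \cite[Theorem 1.3]{CWA} for $\|x+\lambda y\|\ge(1-2\beta)\|x\|$, whereas you re-derive that bound via your case analysis on $s=\|\lambda y\|$ (which checks out), and in (iii) you use the symmetry of Roberts orthogonality where the paper uses $x\perp_R y\Rightarrow x\perp_B y$ and $y\perp_B x$ — these are equivalent.
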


\begin{proof}
Let $T=\alpha U$, where $U$ is an $\varepsilon$-isometry and $\alpha\not=0$. Then 
    $$|\alpha|(1-\delta_1(\varepsilon))\|x\|\leq \|Tx\|\leq |\alpha|(1+\delta_2(\varepsilon))\|x\|$$
    for all $x\in X$, where $\delta_1(\varepsilon)\longrightarrow 0$ and $\delta_2(\varepsilon)\longrightarrow 0$ as $\varepsilon\longrightarrow 0$.
    \begin{itemize}

    \item[(i)] 
    Let $x,y\in X$ such that $x\perp_{D^\beta}y$. Then
    \begin{align*}
      \|Tx+\lambda Ty\|&=\|T(x+\lambda y)\|\\&\geq |\alpha|(1-\delta_1(\varepsilon))\|x+\lambda y\|\\&\geq|\alpha|(1-\delta_1(\varepsilon))\sqrt{1-\beta^2}\|x\|\\&\geq \dfrac{|\alpha|(1-\delta_1(\varepsilon))}{|\alpha|(1+\delta_2(\varepsilon))}\sqrt{1-\beta^2}\|Tx\|\\&=\left(\dfrac{1-\delta_1(\varepsilon)}{1+\delta_2(\varepsilon)}\right)\sqrt{1-\beta^2}\|Tx\|
    \end{align*}
    for all $\lambda\in\mathbb{R}$.

    Let $\eta\in[0,1)$ such that $\sqrt{1-\eta^2}=\left(\dfrac{1-\delta_1(\varepsilon)}{1+\delta_2(\varepsilon)}\right)\sqrt{1-\beta^2}$. Then $$\eta=\sqrt{1-\left(\dfrac{1-\delta_1(\varepsilon)}{1+\delta_2(\varepsilon)}\right)^2(1-\beta^2)}$$ and $Tx\perp_{D^\eta} Ty$. This proves the result.

    \item[(ii)] Let $x,y\in X$ such that $x\perp_B^\beta y$. Then it follows from \cite[Theorem 3.1]{WO} that $\|x+\lambda y\|\geq (1-\frac{2\beta}{1+\beta})\|x\|$ for all $\lambda\in\mathbb{R}$. Let $\gamma\in[0,1)$ such that $\sqrt{1-\gamma^2}=1-\frac{2\beta}{1+\beta}$. Then $\gamma=\sqrt{1-(\frac{1-\beta}{1+\beta})^2}$. This shows that $x\perp_{D^\gamma} y$ and it follows from the arguments similar to (i) that 
    $$\|Tx+\lambda Ty\|\geq \left(\dfrac{1-\delta_1(\varepsilon)}{1+\delta_2(\varepsilon)}\right)\sqrt{1-\gamma^2}\|Tx\|$$ for all $\lambda\in\mathbb{R}$. Thus, $Tx\perp_{D^\eta} Ty$, where $\eta=\sqrt{1-\left(\dfrac{1-\delta_1(\varepsilon)}{1+\delta_2(\varepsilon)}\right)^2{\left(\dfrac{1-\beta}{1+\beta}\right)^2}}$.

    \item[(iii)] Let $x,y\in X$ such that $x\perp_Ry$. Then $x\perp_B y$ and $y\perp_B x$. Now, by taking $\beta=0$ in (ii) it follows that $Tx\perp_{D^{\eta}}Ty$ and $Ty\perp_{D^\eta} Tx$, where $\eta=\sqrt{1-\left(\dfrac{1-\delta_1(\varepsilon)}{1+\delta_2(\varepsilon)}\right)^2}$.
     \end{itemize}
\end{proof}

\begin{theorem}\label{scalar multiple}
    Let $X,Y$ be normed linear spaces and $\beta\in[0,1)$. Let $0\not=T\in \mathcal{L}(X,Y)$ be a scalar multiple of an $\varepsilon$-isometry. If $0\not=S\in \mathcal{L}(X,Y)$ such that $\|S-T\|\leq \varepsilon\|T\|$ then
    \begin{itemize}
        \item[(i)] $S$ is $D^\beta D^\eta$-P, where $\eta=\sqrt{1-\left(\dfrac{1-\delta_1(\varepsilon)-\varepsilon(1+\delta_2(\varepsilon))}{(1+\delta_2(\varepsilon))(1+\varepsilon)}\right)^2(1-\beta^2)}$.
        \item[(ii)] $S$ is $B^\beta D^\eta$-P for $\eta=\sqrt{1-\left(\dfrac{1-\delta_1(\varepsilon)-\varepsilon(1+\delta_2(\varepsilon))}{(1+\delta_2(\varepsilon))(1+\varepsilon)}\right)^2{\left(\dfrac{1-\beta}{1+\beta}\right)^2}}$.
        \item[(iii)] $S$ is $R D^\eta$-P and $R D^\eta$-R for $\eta=\sqrt{1-\left(\dfrac{1-\delta_1(\varepsilon)-\varepsilon(1+\delta_2(\varepsilon))}{(1+\delta_2(\varepsilon))(1+\varepsilon)}\right)^2}$.
    \end{itemize}
\end{theorem}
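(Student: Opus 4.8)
The plan is to reduce everything to the proof of Theorem~\ref{isometry} by first showing that $S$ satisfies the same kind of two‑sided norm estimate as a scalar multiple of an $\varepsilon$‑isometry, but with the ratio $\dfrac{1-\delta_1(\varepsilon)}{1+\delta_2(\varepsilon)}$ replaced by $\dfrac{1-\delta_1(\varepsilon)-\varepsilon(1+\delta_2(\varepsilon))}{(1+\delta_2(\varepsilon))(1+\varepsilon)}$.

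Write $T=\alpha U$ with $U$ an $\varepsilon$‑isometry and $\alpha\neq0$, so that $|\alpha|(1-\delta_1(\varepsilon))\|x\|\le\|Tx\|\le|\alpha|(1+\delta_2(\varepsilon))\|x\|$ for all $x\in X$, and in particular $\|T\|\le|\alpha|(1+\delta_2(\varepsilon))$. Using the hypothesis $\|S-T\|\le\varepsilon\|T\|$ together with the triangle inequality, for every $x\in X$,
\[
\|Sx\|\ \ge\ \|Tx\|-\|(S-T)x\|\ \ge\ |\alpha|\bigl(1-\delta_1(\varepsilon)-\varepsilon(1+\delta_2(\varepsilon))\bigr)\|x\|
\]
and
\[
\|Sx\|\ \le\ \|Tx\|+\|(S-T)x\|\ \le\ |\alpha|(1+\delta_2(\varepsilon))(1+\varepsilon)\|x\|.
\]
Here one should record that the quotient of the two constants is $\dfrac{1-\delta_1(\varepsilon)-\varepsilon(1+\delta_2(\varepsilon))}{(1+\delta_2(\varepsilon))(1+\varepsilon)}\le\dfrac{1}{1+\varepsilon}\le1$, so the numbers $\eta$ appearing in (i)--(iii) lie in $[0,1)$ as soon as $1-\delta_1(\varepsilon)-\varepsilon(1+\delta_2(\varepsilon))>0$, which holds for all sufficiently small $\varepsilon$ since this quantity tends to $1$ as $\varepsilon\to0$.

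With these two inequalities in the role played by the $\varepsilon$‑isometry bounds, the three parts follow exactly as in Theorem~\ref{isometry}. For (i), if $x\perp_{D^\beta}y$ then for all $\lambda\in\mathbb{R}$ one has $\|Sx+\lambda Sy\|=\|S(x+\lambda y)\|\ge|\alpha|\bigl(1-\delta_1(\varepsilon)-\varepsilon(1+\delta_2(\varepsilon))\bigr)\|x+\lambda y\|\ge|\alpha|\bigl(1-\delta_1(\varepsilon)-\varepsilon(1+\delta_2(\varepsilon))\bigr)\sqrt{1-\beta^2}\,\|x\|$, and bounding $\|x\|$ below by $\|Sx\|\big/\bigl(|\alpha|(1+\delta_2(\varepsilon))(1+\varepsilon)\bigr)$ gives $Sx\perp_{D^\eta}Sy$ with the stated $\eta$. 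For (ii), if $x\perp_{B^\beta}y$ with $\beta\in[0,\tfrac12)$ then \cite[Theorem 1.3]{CWA} yields $\|x+\lambda y\|\ge(1-2\beta)\|x\|$, hence $x\perp_{D^\gamma}y$ with $\gamma=2\sqrt{\beta-\beta^2}$ (so that $\sqrt{1-\gamma^2}=1-2\beta$), and the computation of (i) applied with $\gamma$ in place of $\beta$ gives the asserted value of $\eta$. For (iii), $x\perp_R y$ implies $x\perp_B y$ and $y\perp_B x$; since $\perp_B=\perp_{B^0}$, applying (ii) with $\beta=0$ twice gives both $Sx\perp_{D^\eta}Sy$ and $Sy\perp_{D^\eta}Sx$, so $S$ is $RD^\eta$‑P and $RD^\eta$‑R.

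The only substantive step is the first paragraph: propagating the norm estimates from $T$ to $S$ through the perturbation bound and checking that the two resulting constants still have quotient at most $1$, so that $\eta$ is well defined; everything after that is a verbatim repetition of the argument in Theorem~\ref{isometry}. A small point worth being careful about is that $S$ need not literally be a scalar multiple of an $\varepsilon'$‑isometry in the sense of the definition — the constants $1-\delta_1(\varepsilon)-\varepsilon(1+\delta_2(\varepsilon))$ and $(1+\delta_2(\varepsilon))(1+\varepsilon)$ need not be writable as $1\mp\delta_i'(\varepsilon)$ with $\delta_i'(\varepsilon)\to0$ unless one simultaneously lets $\varepsilon\to0$ — so the proof is best phrased directly in terms of the two displayed inequalities for $\|Sx\|$ rather than by invoking Theorem~\ref{isometry} as a black box.
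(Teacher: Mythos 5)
Your proposal is correct and follows essentially the same route as the paper: decompose $T=\alpha U$, propagate the two-sided norm estimate to $S$ via $\|S-T\|\le\varepsilon\|T\|\le\varepsilon|\alpha|(1+\delta_2(\varepsilon))$, and then repeat the computation of Theorem~\ref{isometry} with the new constants. Your added remark that one must check $1-\delta_1(\varepsilon)-\varepsilon(1+\delta_2(\varepsilon))>0$ so that the stated $\eta$ actually lies in $[0,1)$ is a small point the paper leaves implicit, but it does not change the argument.
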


\begin{proof}
Let $T=\alpha U$, where $U$ is an $\varepsilon$-isometry and $\alpha\not= 0$. Then 
    \begin{align}\label{eq2}
    |\alpha|(1-\delta_1(\varepsilon))\|x\|\leq \|Tx\|\leq |\alpha|(1+\delta_2(\varepsilon))\|x\|
    \end{align}
    for all $x\in X$, where $\delta_1(\varepsilon)\longrightarrow 0$ and $\delta_2(\varepsilon)\longrightarrow 0$ as $\varepsilon\longrightarrow 0$.

Also,
\begin{align}\label{eq3}
|\|Sx\|-\|Tx\||\leq \|Sx-Tx\|\leq \varepsilon\|T\|\|x\|\leq \varepsilon|\alpha|(1+\delta_2(\varepsilon))\|x\|
\end{align}
for all $x\in X$.
 Thus, by using Eqns.~(\ref{eq2}) and (\ref{eq3}), we get

 $$|\alpha| (1-\delta_1(\varepsilon)-\varepsilon(1+\delta_2(\varepsilon)))\|x\|\leq \|Sx\|\leq |\alpha|(1+\varepsilon)(1+\delta_2(\varepsilon))\|x\|$$
 for all $x\in X$.

    \begin{itemize}

    \item[(i)] 
    Let $x,y\in X$ such that $x\perp_{D^\beta}y$. Then
    \begin{align*}
      \|Sx+\lambda Sy\|&=\|S(x+\lambda y)\|\\&\geq |\alpha| [1-\delta_1(\varepsilon)-\varepsilon(1+\delta_2(\varepsilon))]\|x+\lambda y\|\\&\geq|\alpha| (1-\delta_1(\varepsilon)-\varepsilon(1+\delta_2(\varepsilon)))\sqrt{1-\beta^2}\|x\|\\&\geq \dfrac{|\alpha| (1-\delta_1(\varepsilon)-\varepsilon(1+\delta_2(\varepsilon)))}{|\alpha|(1+\varepsilon)(1+\delta_2(\varepsilon))}\sqrt{1-\beta^2}\|Sx\|\\&=\dfrac{ (1-\delta_1(\varepsilon)-\varepsilon(1+\delta_2(\varepsilon)))}{(1+\varepsilon)(1+\delta_2(\varepsilon))}\sqrt{1-\beta^2}\|Sx\|
    \end{align*}
    for all $\lambda\in\mathbb{R}$.

    Let $\eta\in[0,1)$ such that $\sqrt{1-\eta^2}=\dfrac{ (1-\delta_1(\varepsilon)-\varepsilon(1+\delta_2(\varepsilon)))}{(1+\varepsilon)(1+\delta_2(\varepsilon))}\sqrt{1-\beta^2}$. Then $\eta=\sqrt{1-\left(\dfrac{1-\delta_1(\varepsilon)-\varepsilon(1+\delta_2(\varepsilon))}{(1+\delta_2(\varepsilon))(1+\varepsilon)}\right)^2(1-\beta^2)}$ and $Sx\perp_{D^\eta} Sy$. This proves the result.

    \item[(ii)] Let $x,y\in X$ such that $x\perp_{B^\beta} y$. Then it follows the arguments used in Theorem~\ref{isometry} that $x\perp_{D^\gamma} y$, where $\gamma=\sqrt{1-(\frac{1-\beta}{1+\beta})^2}$. Now, it follows from the arguments similar to (i) that 
    $$\|Sx+\lambda Sy\|\geq \dfrac{ (1-\delta_1(\varepsilon)-\varepsilon(1+\delta_2(\varepsilon)))}{(1+\varepsilon)(1+\delta_2(\varepsilon))}\sqrt{1-\gamma^2}\|Sx\|$$ for all $\lambda\in\mathbb{R}$. If we take $\eta=\sqrt{1-\left(\dfrac{1-\delta_1(\varepsilon)-\varepsilon(1+\delta_2(\varepsilon))}{(1+\delta_2(\varepsilon))(1+\varepsilon)}\right)^2{\left(\dfrac{1-\beta}{1+\beta}\right)^2}}$ then $Sx\perp_{D^\eta} Sy$.

    \item[(iii)] Let $x,y\in X$ such that $x\perp_Ry$. Then $x\perp_B y$ and $y\perp_B x$. Now, by taking $\beta=0$ in (ii) it follows that $Sx\perp_{D^{\eta}}Sy$ and $Sy\perp_{D^\eta} Sx$, where $\eta=\sqrt{1-\left(\dfrac{1-\delta_1(\varepsilon)-\varepsilon(1+\delta_2(\varepsilon))}{(1+\delta_2(\varepsilon))(1+\varepsilon)}\right)^2}$.
     \end{itemize}
\end{proof}

The following result follows immediately from Theorem~\ref{scalar multiple}.
\begin{corollary}
    Let $X,Y$ be normed linear spaces and $\varepsilon,\beta\in[0,1)$. Let $0\not=T\in \mathcal{L}(X,Y)$ be a scalar multiple of an isometry. If $0\not=S\in \mathcal{L}(X,Y)$ such that $\|S-T\|\leq \varepsilon\|T\|$ then
    \begin{itemize}
        \item[(i)] $T$ is $D^\beta D^\eta$-P, where $\eta=\sqrt{1-(\frac{1-\varepsilon}{1+\varepsilon})^2(1-\beta^2)}$.
        \item[(ii)] $T$ is $B^\beta D^\eta$-P for $\eta=\sqrt{1-(\frac{1-\varepsilon}{1+\varepsilon})^2{\left(\dfrac{1-\beta}{1+\beta}\right)^2}}$.
        \item[(iii)] $T$ is $R D^\eta$-P and $R D^\eta$-R for $\eta=\sqrt{1-(\frac{1-\varepsilon}{1+\varepsilon})^2}$.
        \end{itemize}
\end{corollary}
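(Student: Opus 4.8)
The plan is to observe that an isometry is a trivial instance of the maps already treated in Theorem~\ref{scalar multiple}, and then simply specialize the constants; no new argument is needed. First I would note that if $I:X\longrightarrow Y$ is an isometry, then it satisfies the defining inequality $(1-\delta_1(\varepsilon))\|x\|\leq \|Ix\|\leq (1+\delta_2(\varepsilon))\|x\|$ of an $\varepsilon$-isometry with the trivial choice $\delta_1(\varepsilon)=\delta_2(\varepsilon)=0$ for every $\varepsilon\in[0,1)$ (so that $\delta_1(\varepsilon),\delta_2(\varepsilon)\to 0$ is automatic). Hence every isometry is an $\varepsilon$-isometry, and consequently the given $T$, being a scalar multiple $T=\alpha I$ of an isometry with $\alpha\neq 0$, is a scalar multiple of an $\varepsilon$-isometry with $\delta_1\equiv\delta_2\equiv 0$, for the very $\varepsilon$ appearing in the hypothesis $\|S-T\|\leq\varepsilon\|T\|$.

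Next I would apply Theorem~\ref{scalar multiple} verbatim to this $T$ and the given $S$. Substituting $\delta_1(\varepsilon)=\delta_2(\varepsilon)=0$ into the exponent occurring there, the quantity
\[
\frac{1-\delta_1(\varepsilon)-\varepsilon(1+\delta_2(\varepsilon))}{(1+\delta_2(\varepsilon))(1+\varepsilon)}
\]
collapses to $\dfrac{1-\varepsilon}{1+\varepsilon}$. Feeding this into parts (i), (ii) and (iii) of Theorem~\ref{scalar multiple} produces exactly the three stated formulas for $\eta$, together with the corresponding conclusions that $S$ is $D^\beta D^\eta$-P, that $S$ is $B^\beta D^\eta$-P (under the inherited restriction $\beta\in[0,\tfrac12)$), and that $S$ is both $RD^\eta$-P and $RD^\eta$-R.

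There is essentially no obstacle here, since the corollary is a direct specialization of Theorem~\ref{scalar multiple}. The only point worth a line of care is checking that each displayed $\eta$ genuinely lies in $[0,1)$, i.e.\ that the term subtracted from $1$ under the square root is in $(0,1]$: this holds because $\varepsilon\in[0,1)$ forces $0<\dfrac{1-\varepsilon}{1+\varepsilon}\leq 1$, while $1-\beta^2\in(0,1]$ for $\beta\in[0,1)$ and $(1-2\beta)^2\in(0,1]$ for $\beta\in[0,\tfrac12)$, so the product stays in $(0,1]$ in every case. With that remark in place the proof is complete.
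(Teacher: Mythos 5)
Your proposal is correct and matches the paper exactly: the paper states this corollary as an immediate consequence of Theorem~\ref{scalar multiple}, obtained precisely by noting that an isometry is an $\varepsilon$-isometry with $\delta_1(\varepsilon)=\delta_2(\varepsilon)=0$ and substituting into the formulas for $\eta$. Your additional check that each $\eta$ lies in $[0,1)$ is a harmless extra remark.
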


\section{locally $B D^\varepsilon$-P operators}

In this section, we will study operators with a finite-dimensional domain for which locally $B D^\varepsilon$-P at some points implies that the operator is $B D^\eta$-P, where $\varepsilon,\eta\in[0,1)$.

We recall the following result from \cite{DAY}.

   \begin{theorem}\cite[Theorem 4.1]{DAY}\label{basis}
    Let $X$ be an $n$-dimensional Banach space. Then there exists a basis $\{x_1,x_2,\ldots,x_{i-1},x_i,x_{i+1},\ldots,x_n\}$ of $X$ such that $x_i\perp_B \mbox{span}\{x_1,\\x_2,\ldots,x_{i-1},x_{i+1},\ldots,x_n\}$ for all $1\leq i \leq n$.
\end{theorem}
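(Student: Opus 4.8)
The plan is to produce the basis by an Auerbach-type volume-maximization argument. First I would fix an arbitrary algebraic basis of $X$, thereby identifying $X$ with $\mathbb{R}^n$ and obtaining a determinant functional $\det(u_1,\dots,u_n)$ on $n$-tuples of vectors of $X$. Define $V\colon S_X^n\to[0,\infty)$ by $V(u_1,\dots,u_n)=|\det(u_1,\dots,u_n)|$. Since $X$ is finite-dimensional, $S_X$ is compact, hence so is $S_X^n$, and $V$ is continuous; therefore $V$ attains its supremum at some tuple $(x_1,\dots,x_n)\in S_X^n$. Choosing any linearly independent $n$-tuple of unit vectors (possible since $\dim X=n$) shows that this supremum is strictly positive, so the maximizer $(x_1,\dots,x_n)$ is linearly independent and hence a basis of $X$.

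Next I would show this maximizing basis has the required orthogonality property. Fix $1\le i\le n$ and put $M_i=\mbox{span}\{x_j:j\ne i\}$. Suppose, for contradiction, that $x_i\not\perp_B M_i$. Since $M_i$ is a subspace, this means there is $m\in M_i$ with $\|x_i+m\|<\|x_i\|=1$. Because $m$ is a linear combination of the columns $x_j$, $j\ne i$, the multilinearity and alternating property of $\det$ give $|\det(x_1,\dots,x_{i-1},x_i+m,x_{i+1},\dots,x_n)|=|\det(x_1,\dots,x_n)|=V(x_1,\dots,x_n)$. Setting $x_i'=(x_i+m)/\|x_i+m\|\in S_X$ and using homogeneity of $\det$ in the $i$-th slot, I obtain
\[
V(x_1,\dots,x_{i-1},x_i',x_{i+1},\dots,x_n)=\frac{1}{\|x_i+m\|}\,V(x_1,\dots,x_n)>V(x_1,\dots,x_n),
\]
contradicting the maximality of $(x_1,\dots,x_n)$. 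Hence $x_i\perp_B M_i$, and since $i$ was arbitrary the theorem follows.

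I do not expect a serious obstacle here; the only points needing care are that the maximum of $V$ is positive (so that the maximizer really is a basis and the determinant does not vanish) and the elementary linear-algebra fact that adding to one column a linear combination of the other columns leaves $|\det|$ unchanged while rescaling a column rescales $|\det|$ proportionally. If one prefers a coordinate-free formulation, the same argument runs verbatim with $V$ replaced by any fixed nonzero alternating $n$-linear form on $X$, or by the Lebesgue measure of the parallelepiped $\{\sum_j t_j x_j:|t_j|\le 1\}$ relative to a fixed Haar measure on $X$; the crucial scaling step is identical in each case.
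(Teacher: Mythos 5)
Your volume-maximization (Auerbach) argument is correct and complete: the maximizer exists by compactness, is a basis because $V>0$ there, and the column-operation/rescaling step correctly forces $\|x_i+m\|\geq 1$ for all $m\in M_i$ (the implicit point that $x_i+m\neq 0$ is already guaranteed by the nonvanishing determinant). The paper gives no proof of this statement, citing it from Day, whose original argument is the same maximal-volume (inscribed parallelotope) construction, so your proposal matches the source's approach.
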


We fix the following notation. Let $X$ an $n$-dimensional Banach space. We say that a basis $\{x_1,x_2,\ldots,x_{i-1},x_i,x_{i+1},\ldots,x_n\}$ of $X$ satisfies the property $(*)$ if $x_i\perp_B \mbox{span}\{x_1,x_2,\ldots,x_{i-1},x_{i+1},\ldots,x_n\}$ for all $1\leq i \leq n$.

\begin{theorem}\label{r4}
    Let $X$ be an $n$-dimensional Banach space and $\{x_1,x_2,\ldots,x_{i-1},x_i,\\x_{i+1},\ldots,x_n\}$ be a basis of $X$ that satisfies property $(*)$. Let $Y$ be a normed linear space. If $T\in S_{\mathcal{L}(X,Y)}$ is $BD^{\varepsilon_i}$-P at each $x_i$ for some $\varepsilon_i\in[0,1)$ and $Tx_i\not=0$ for all $i$ then $T$ is $B D^{\eta}$-P for some $\eta\in[0,1)$.
\end{theorem}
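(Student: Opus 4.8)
The plan is to reduce the problem to Theorem~\ref{r1}. That is, I would verify the following: whenever $\{u_n\},\{v_n\}\subseteq S_X$ satisfy $u_n\perp_B v_n$ for all $n$, $u_n\to u$, $v_n\to v$ for some $u,v\in S_X$, and $Tu,Tv\neq 0$, then $Tu$ and $Tv$ are linearly independent. Once this is established, Theorem~\ref{r1} immediately gives that $T$ is $BD^\eta$-P for some $\eta\in[0,1)$ (note $B\in\{UI,R,B,B^\varepsilon,D^\varepsilon\}$, so Theorem~\ref{r1} applies with $\Delta=B$).

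So let me take such sequences and suppose toward a contradiction that $Tu=\alpha Tv$ for some $\alpha\in\mathbb{R}$; since $Tu,Tv\neq 0$ we have $\alpha\neq 0$. By continuity of the norm, $u\perp_B v$. The key structural input is the property $(*)$ of the basis: write $u=\sum_{i=1}^n a_i x_i$. I want to use the local hypothesis at the coordinates $x_i$. Here is where the argument needs care. The hypothesis gives that $T$ is $BD^{\varepsilon_i}$-P \emph{at} $x_i$, i.e. $x_i\perp_B w$ implies $Tx_i\perp_{D^{\varepsilon_i}}Tw$. By property $(*)$, $x_i\perp_B x_j$ for all $j\neq i$, hence $Tx_i\perp_{D^{\varepsilon_i}}Tx_j$ for all $j\neq i$; in particular, for each $i$, $Tx_i$ is linearly independent from each $Tx_j$ with $j\neq i$. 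More strongly, since $x_i\perp_B\operatorname{span}\{x_j:j\neq i\}$, the local hypothesis yields $Tx_i\perp_{D^{\varepsilon_i}}Tz$ for every $z\in\operatorname{span}\{x_j:j\neq i\}$, which forces $Tx_i\notin\operatorname{span}\{Tx_j:j\neq i\}$ whenever $Tz$ ranges over that subspace nontrivially. Combined with $Tx_i\neq 0$ for all $i$, I would conclude that $\{Tx_1,\dots,Tx_n\}$ is a linearly independent set, i.e. $T$ is injective. Then $Tu=\alpha Tv$ forces $u=\alpha v$, contradicting $u\perp_B v$ (two nonzero scalar multiples of each other are never Birkhoff--James orthogonal), and we are done.

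Let me reconsider whether the conclusion "$T$ injective" is too strong to extract from merely \emph{local} hypotheses; if it does follow, the theorem is almost trivial, so presumably the intended argument is more delicate, perhaps only extracting that $Tu,Tv$ are independent for the \emph{limit} pair without globally establishing injectivity. I would therefore structure the proof to isolate exactly the statement needed: given $u\perp_B v$ with $Tu=\alpha Tv$, $\alpha\neq 0$, derive a contradiction. Expand $v=\sum b_i x_i$ and let $k$ be an index with $b_k\neq 0$. From $u\perp_B v$ there is $f\in\mathcal{J}(u)$ with $f(v)=0$; I would try to relate $f$ to the functionals witnessing the orthogonalities $x_i\perp_B\operatorname{span}\{x_j:j\neq i\}$, using that $\mathcal{J}(x_i)$ contains a functional vanishing on $\operatorname{span}\{x_j:j\neq i\}$. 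Applying $T$ and using $BD^{\varepsilon_i}$-P at each $x_i$, one gets, for each $i$, a functional $g_i\in S_{Y^*}$ with $|g_i(Tx_i)|\geq\sqrt{1-\varepsilon_i^2}\|Tx_i\|>0$ and $g_i(Tx_j)=0$ for $j\neq i$; these functionals separate $Tx_i$ from the span of the others, again yielding independence of $\{Tx_i\}$ and hence injectivity of $T$ on $X$.

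The main obstacle, then, is to confirm that local $BD^{\varepsilon_i}$-P at $x_i$ really does upgrade (via property $(*)$ and the functional characterization $x\perp_B y\iff f(y)=0$ for some $f\in\mathcal{J}(x)$, together with the $D^\varepsilon$ characterization) to the statement "$Tx_i\perp_{D^{\varepsilon_i}}\operatorname{span}\{Tx_j:j\neq i\}$" — the subtlety being that being $BD^{\varepsilon_i}$-P at a single point $x_i$ only controls orthogonality \emph{from} $Tx_i$, and one must check that $Tx_i\perp_{D^{\varepsilon_i}}w$ for all $w$ in the image of the complementary span, not just for the finitely many generators $Tx_j$. If that subspace-level conclusion holds, injectivity of $T$ follows and then Theorem~\ref{r1} (applied to arbitrary convergent $\perp_B$-pairs, whose limits automatically have linearly independent images under an injective $T$) closes the argument. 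I expect this functional-analytic bookkeeping — passing from pointwise to subspace-level approximate orthogonality — to be the crux, while the limiting/compactness part is handled entirely by invoking Theorem~\ref{r1}.
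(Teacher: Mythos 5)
Your argument is correct and takes essentially the same route as the paper: both deduce injectivity of $T$ from property $(*)$ combined with the local hypotheses at the $x_i$ (the paper by supposing $Tx=0$ and isolating one coordinate, you by showing $\{Tx_1,\dots,Tx_n\}$ is linearly independent), and then invoke the global characterization (the paper via Theorem~\ref{one}, you directly via Theorem~\ref{r1}). The ``subspace-level'' step you flag as the main obstacle is in fact immediate: being $BD^{\varepsilon_i}$-P \emph{at} $x_i$ quantifies over all $y\in X$ with $x_i\perp_B y$, and property $(*)$ gives $x_i\perp_B z$ for every $z\in\operatorname{span}\{x_j:j\neq i\}$, so $Tx_i\perp_{D^{\varepsilon_i}}Tz$ for all such $z$ with no further argument needed.
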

\begin{proof}
    It follows from Theorem~\ref{one} that to prove the result it is sufficient to show that $T$ is injective. If $T$ is not injective, then there exists an $x\in S_X$ such that $Tx=0$. Let $x=\sum_{i=1}^n \alpha_ix_i$, where $\alpha_i\in\mathbb{R}$, $1\leq i\leq n$. Since $Tx_i\not=0$ for each $i$ and $x\in S_X$, we get $\alpha_i$ is non-zero for at least two values of $i$. Let $1\leq i_1\leq n$ such that $\alpha_{i_1}\not=0$. Then $\alpha_{i_1}Tx_{i_1}=-\sum_{i\not=i_1}\alpha_i Tx_i$. Now, by using property $(*)$ of the given basis we get $x_{i_1}\perp_B \sum_{i\not=i_1}\alpha_i x_i$. Thus,  $Tx_{i_1}\perp_{D^{\varepsilon_{i_1}}} \sum_{i\not=i_1}\alpha_i Tx_i$ for some $\varepsilon_{i_1}\in[0,1)$, a contradiction. Thus, $T$ is one-to-one and the result follows. 
\end{proof}

Let $\{e_1,e_2,\ldots,e_n\}$ denotes the standard unit vector basis of $\ell_1^n$. The basis $\{e_1,e_2,\\\ldots,e_n\}$ of $\ell_1^n$ satisfies property $(*)$. In the following result we show that if any operator $T$ from $\ell_1^n$ to any normed linear space $Y$ is $BD^\varepsilon$-P at any one of the basis element $e_i$ with $Te_i\not=0$ then $T$ is non-zero at each of the basis elements $e_j$, where $\varepsilon\in[0,1)$.

\begin{proposition}\label{r5}
    Let $Y$ be a normed linear space. Let $T\in S_{\mathcal{L}(\ell_1^n,Y)}$ and $1\leq i\leq n$ such that $T$ is $BD^\varepsilon$-P at $e_i$ and $Te_i\not=0$. Then $Te_j\not=0$ for all $1\leq j\leq n$.
\end{proposition}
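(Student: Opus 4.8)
The plan is to argue by contradiction: suppose some basis vector is killed by $T$, say $Te_j = 0$ with $j \neq i$, and derive a contradiction with the hypothesis that $T$ is $BD^\varepsilon$-P at $e_i$ and $Te_i \neq 0$. The key feature of $\ell_1^n$ to exploit is that for the unit vector $e_i$, the Birkhoff--James orthogonality relation $e_i \perp_B y$ is very permissive: writing $y = \sum_k \beta_k e_k$, one has $e_i \perp_B y$ precisely when the coordinate functional $e_i^*$ (which lies in $\mathcal{J}(e_i)$, since $\|e_i^*\|_{\ell_\infty^n} = 1$ and $e_i^*(e_i) = 1$) annihilates $y$, i.e. when $\beta_i = 0$. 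So $e_i \perp_B e_k$ for every $k \neq i$, and more generally $e_i \perp_B \sum_{k \neq i} \beta_k e_k$ for all choices of $\beta_k$.

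First I would fix $j \neq i$ with $Te_j = 0$ and consider, for a real parameter $t$, the vector $y_t = e_j + t\,e_k$ where $k$ is some index different from both $i$ and $j$ (such $k$ exists once $n \geq 3$; the case $n = 2$ is handled separately, see below). Since $y_t$ has zero $i$-th coordinate, $e_i \perp_B y_t$, hence by hypothesis $Te_i \perp_{D^\varepsilon} Ty_t$, i.e.
\[
\|Te_i + \lambda\, Ty_t\| \geq \sqrt{1-\varepsilon^2}\,\|Te_i\| \quad \text{for all } \lambda \in \mathbb{R}.
\]
But $Ty_t = Te_j + t\,Te_k = t\,Te_k$ because $Te_j = 0$. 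Thus $Te_i \perp_{D^\varepsilon} Te_k$ as well (replacing $\lambda t$ by a new parameter, using homogeneity of $\perp_{D^\varepsilon}$). Now apply the hypothesis more cleverly: take instead $z = \sum_{k \neq i} \beta_k e_k$ with $\beta_j$ free; again $e_i \perp_B z$, so $Te_i \perp_{D^\varepsilon} Tz = \sum_{k \neq i,\, k \neq j} \beta_k\, Te_k$, which carries no information about $Te_j$. The real point is the reverse direction: I want to show $Te_j \neq 0$, so I should look for an orthogonality relation that *forces* $Te_j$ into the picture. The natural move is to use that $e_i \perp_B (e_j + \beta\, e_i \cdot 0 + \cdots)$ — but $e_i$'s orthogonality can never involve $e_i$ itself on the right. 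Instead, consider $w = e_j - e_i + 2e_i = e_j + e_i$? No: $e_i \perp_B (e_i + e_j)$ fails. The correct device is to use $\perp_B$ at $e_i$ applied to a vector whose image, after $T$, is a nonzero multiple of $Te_i$ plus $Te_j$; since $Te_i \perp_{D^\varepsilon}$ of that image must then hold, and $D^\varepsilon$-orthogonality to a vector proportional to oneself is impossible, we get a contradiction unless the $Te_j$ term is there to rescue it — forcing $Te_j \neq 0$.

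Concretely: by Theorem~\ref{basis}-type reasoning we may pick $u \in \mathrm{span}\{e_k : k \neq i\}$ with $u = e_j + \sum_{k \neq i, j}\gamma_k e_k$ chosen so that $Tu$ is a nonzero scalar multiple of $Te_i$ — this is possible precisely when $Te_j = 0$ would make $Tu = \sum_{k \neq i,j}\gamma_k Te_k$ land in the span of $\{Te_k : k \neq i, j\}$; if that span together with $\mathbb{R}Te_i$ exhausts the range and one can hit $\mathbb{R}Te_i \setminus\{0\}$, then $e_i \perp_B u$ yields $Te_i \perp_{D^\varepsilon} c\,Te_i$ for some $c \neq 0$, whence $\|Te_i + \lambda c Te_i\| \geq \sqrt{1-\varepsilon^2}\|Te_i\|$ for all $\lambda$; choosing $\lambda = -1/c$ gives $0 \geq \sqrt{1-\varepsilon^2}\|Te_i\| > 0$, a contradiction. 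The main obstacle, and the step requiring genuine care, is ensuring such a $u$ exists: one must verify that when $Te_j = 0$, the span of $\{Te_k : k \neq i\}$ (which then equals the span of $\{Te_k : k \neq i, j\}$) actually contains a nonzero multiple of $Te_i$ — equivalently, that $T$ being $BD^\varepsilon$-P at $e_i$ plus $Te_j = 0$ collapses the range dimension enough to create the forced proportionality. I expect this to follow from a dimension/linear-algebra count combined with the observation that $e_i \perp_B$ every element of $\mathrm{span}\{e_k : k\neq i\}$, so $Te_i \perp_{D^\varepsilon}$ the entire subspace $T(\mathrm{span}\{e_k : k \neq i\})$; if $Te_j = 0$ this subspace might fail to contain $\mathbb{R}Te_i$, in which case one instead argues that then $Te_i$ together with $\{Te_k : k \neq i, j\}$ spans a subspace on which $T$ restricted appropriately contradicts injectivity-type consequences — and the $n = 2$ case is immediate since there $\{e_1, e_2\}$ with one image zero makes $T$ rank one, while $e_1 \perp_B e_2$ forces $Te_1 \perp_{D^\varepsilon} Te_2 = 0$ trivially but one uses $e_2 \perp_B e_1$ and the symmetric setup to pin it down. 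I would present the clean contradiction via the self-orthogonality impossibility as the heart of the argument, relegating the existence of the witnessing vector $u$ to a short lemma-style paragraph.
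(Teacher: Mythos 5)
There is a genuine error at the foundation of your argument. You assert that in $\ell_1^n$ one has $e_i\perp_B y$ \emph{precisely} when the $i$-th coordinate of $y$ vanishes, and later you explicitly state that ``$e_i\perp_B(e_i+e_j)$ fails.'' Both claims are false: $e_i$ is not a smooth point of $\ell_1^n$, so $\mathcal{J}(e_i)$ is not the singleton $\{e_i^*\}$ but the set of all $f=(f_1,\dots,f_n)\in S_{\ell_\infty^n}$ with $f_i=1$, and $e_i\perp_B y$ holds iff some such $f$ annihilates $y$, i.e.\ iff $|y_i|\leq\sum_{k\neq i}|y_k|$. In particular $e_i\perp_B(e_i+e_j)$ \emph{does} hold for every $j\neq i$ (directly: $\|e_i+\lambda(e_i+e_j)\|_1=|1+\lambda|+|\lambda|\geq 1$ for all $\lambda$). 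This single relation is the entire content of the paper's proof: if $Te_{j_0}=0$, then $T(e_i+e_{j_0})=Te_i$, so $BD^\varepsilon$-preservation at $e_i$ gives $Te_i\perp_{D^\varepsilon}Te_i$, which is impossible for $Te_i\neq 0$ (take $\lambda=-1$).

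Because you denied yourself this witness, the rest of your proposal never closes. Your intended final contradiction (producing $Te_i\perp_{D^\varepsilon}c\,Te_i$ with $c\neq 0$) is exactly the right target, but the vector $u$ that is supposed to realize it is never constructed: the span/dimension-count paragraph ends with ``I expect this to follow,'' the $n=2$ case is left unresolved, and under your (incorrect) restriction that $u$ must have zero $i$-th coordinate, $Tu$ lies in $T(\mathrm{span}\{e_k:k\neq i\})$ and there is no reason for it to be a nonzero multiple of $Te_i$. Correcting the characterization of $\perp_B$ at $e_i$ collapses the whole construction to the one line $u=e_i+e_{j_0}$ and makes the proof immediate.
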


\begin{proof}
 We first observe that $e_i\perp_B e_i+e_j$ for all $j\not=i$. Let $Te_{j_0}=0$ for some $1\leq j_0\leq n$. By using  $T$ is $BD^\varepsilon$-P at $e_i$, $Te_i\not=0$ and $e_i\perp_B e_i+e_{j_0}$, we get $Te_i\perp_{D^\varepsilon} Te_i$, a contradiction. Thus, $Te_j\not=0$ for all $1\leq j\leq n$.
\end{proof}

As a consequence of Theorem~\ref{r4} and Proposition~\ref{r5} we now prove the following results.
\begin{corollary}\label{c2}
    Let $Y$ be a normed linear space. Let $T\in {\mathcal{L}(\ell_1^n,Y)}$ such that $T$ is $BD^{\varepsilon_i}$-P at each $e_i$ for some $\varepsilon_i\in[0,1)$. Then one of the following is true.
    \begin{itemize}
        \item[(i)] $T=0$
        \item[(ii)]$T$ is one-to-one.
    \end{itemize}
\end{corollary}

\begin{proof}
    If $T=0$ then (i) follows.
    Let $T\not=0$, there exists at least one $i$, $1\leq i\leq n$ such that $Te_i\not=0$. Then it follows from Proposition~\ref{r5} that $Te_k\not=0$ for each $1\leq k\leq n$. Since $\{e_1,e_2,\ldots,e_n\}$ satisfies property $(*)$, by using Theorem~\ref{r4}, we get $T$ is one-to-one.
    \end{proof}

\begin{corollary}\label{c3}
    Let $Y$ be a normed linear space. Let $T\in S_{\mathcal{L}(\ell_1^n,Y)}$ such that $T$ is $BD^{\varepsilon_i}$-P at each $e_i$ for some $\varepsilon_i\in[0,1)$. Then $T$ is $BD^\eta$-P for some $\eta\in[0,1)$.
\end{corollary}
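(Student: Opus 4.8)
The plan is to obtain Corollary~\ref{c3} as an immediate combination of Corollary~\ref{c2} with Theorem~\ref{one}, using that the hypothesis $T\in S_{\mathcal{L}(\ell_1^n,Y)}$ forces $T\neq 0$. First I would note that since $\|T\|=1$ we certainly have $T\neq 0$, so case (i) of Corollary~\ref{c2} is excluded and case (ii) must hold: $T$ is one-to-one. Next, since $\ell_1^n$ is a finite-dimensional Banach space and $T\in S_{\mathcal{L}(\ell_1^n,Y)}$, Theorem~\ref{one} applies with $\Delta=B$; the equivalence of (iii) and (i) in that theorem tells us that $T$ being injective implies $T$ is $BD^\eta$-P for some $\eta\in[0,1)$. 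That is exactly the conclusion.

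There is essentially no real obstacle here — the work has already been done in Corollary~\ref{c2} (which in turn rests on Proposition~\ref{r5} and Theorem~\ref{r4}) and in Theorem~\ref{one}. The only thing to be careful about is verifying that the hypotheses of the cited results are genuinely met: Corollary~\ref{c2} is stated for $T\in\mathcal{L}(\ell_1^n,Y)$ that is $BD^{\varepsilon_i}$-P at each $e_i$, which is precisely our hypothesis; and Theorem~\ref{one} requires a finite-dimensional Banach space domain and $T\in S_{\mathcal{L}(X,Y)}$, both of which hold. So the proof is a two-line deduction.

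Concretely, I would write:

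\begin{proof}
    Since $T\in S_{\mathcal{L}(\ell_1^n,Y)}$, we have $T\not=0$. Thus, it follows from Corollary~\ref{c2} that $T$ is one-to-one. Now, $\ell_1^n$ is a finite-dimensional Banach space and $T\in S_{\mathcal{L}(\ell_1^n,Y)}$, so Theorem~\ref{one} (with $\Delta=B$) implies that $T$ is $BD^\eta$-P for some $\eta\in[0,1)$.
\end{proof}
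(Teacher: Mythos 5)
Your proposal is correct and follows exactly the route the paper itself indicates: the paper derives Corollary~\ref{c3} by combining Corollary~\ref{c2} (which forces injectivity once $T\neq 0$) with the equivalence of injectivity and the $BD^\eta$-P property in Theorem~\ref{one}. Your only addition is the explicit observation that $T\in S_{\mathcal{L}(\ell_1^n,Y)}$ rules out $T=0$, which is a correct and worthwhile clarification.
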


We now show that if $T\in \mathcal{L}(X,Y)$ is $BD^\varepsilon$-P at an extreme point $x\in S_X$ and $Tx\not=0$ then $T$ is $BD^\eta$-P, where $\varepsilon,\eta\in[0,1)$, $X$ is a two-dimensional polyhedral Banach and $Y$ is a normed linear space.

\begin{theorem}\label{r6}
    Let $X$ be a two-dimensional polyhedral Banach space and $Y$ be a normed linear space. If $T\in S_{\mathcal{L}(X,Y)}$ is $BD^\varepsilon$-P at an extreme point $x\in S_X$ and $Tx\not=0$ then  $T$ is $BD^\eta$-P for some $\eta\in[0,1)$, where $\varepsilon\in[0,1)$.
\end{theorem}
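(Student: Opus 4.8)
The plan is to deduce the statement from Theorem~\ref{one}: since $X$ is two-dimensional, hence a finite-dimensional Banach space, and $T\in S_{\mathcal L(X,Y)}$, it suffices to prove that $T$ is one-to-one, and then Theorem~\ref{one} automatically upgrades this to $BD^\eta$-P for some $\eta\in[0,1)$. So the entire task reduces to showing that the hypotheses --- $T$ is $BD^\varepsilon$-P at the extreme point $x\in S_X$, $Tx\neq 0$, $\dim X=2$, and $X$ polyhedral --- force injectivity of $T$.

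I would argue by contradiction. Assume $T$ is not injective. Since $\|T\|=1$ we have $T\neq 0$, and $\dim X=2$ forces $\ker T$ to be exactly one-dimensional, say $\ker T=\mathbb{R}z$ with $z\in S_X$. Then $\dim T(X)=2-\dim\ker T=1$, and $Tx\neq 0$ gives $T(X)=\mathbb{R}\,Tx$; consequently every $w\in X$ satisfies $Tw=\mu_w\,Tx$ for some scalar $\mu_w$, with $\mu_w\neq 0$ precisely when $w\notin\mathbb{R}z$. Next I would use that $x$ is a vertex of the polygon $B_X$: because $X$ is two-dimensional and polyhedral, $S_X$ is a polygon whose extreme points are its vertices, and at a vertex the two adjacent edges lie in distinct supporting lines, so $x$ is not smooth and $\mathcal J(x)$ contains two distinct functionals $f_1,f_2$. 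Since $f_1(x)=f_2(x)=1$, $f_1$ and $f_2$ are not proportional, so in the two-dimensional space $X$ their kernels are distinct one-dimensional subspaces; picking unit vectors $y_1\in\ker f_1$ and $y_2\in\ker f_2$ produces four distinct points $y_1,-y_1,y_2,-y_2$, each Birkhoff--James orthogonal to $x$ (the functional $f_i$ witnesses $x\perp_B y_i$ via $x\perp_B y\iff f(y)=0$ for some $f\in\mathcal J(x)$). At most two of these four points lie in $\{z,-z\}$, so I may choose $y\in\{\pm y_1,\pm y_2\}$ with $y\notin\mathbb{R}z$; then $x\perp_B y$ and $Ty=\mu_y\,Tx$ with $\mu_y\neq 0$.

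To finish, apply the hypothesis $BD^\varepsilon$-P at $x$ to the pair $x\perp_B y$: it yields $Tx\perp_{D^\varepsilon}Ty$, i.e. $\|Tx+\lambda Ty\|\geq\sqrt{1-\varepsilon^2}\,\|Tx\|$ for all $\lambda\in\mathbb{R}$. But $Ty=\mu_y\,Tx$ with $\mu_y\neq 0$ makes the left-hand side equal $|1+\lambda\mu_y|\,\|Tx\|$, which vanishes at $\lambda=-1/\mu_y$, while the right-hand side is strictly positive since $Tx\neq 0$ and $\varepsilon<1$ --- a contradiction. Hence $\ker T=\{0\}$, and Theorem~\ref{one} completes the proof.

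The one genuinely non-formal step, and the one I would be most careful with, is extracting two distinct norming functionals $f_1,f_2\in\mathcal J(x)$; this is precisely where both "extreme point" and "polyhedral" (and, via Theorem~\ref{one}, finite-dimensionality) enter, and the argument would collapse at a smooth extreme point --- consistent with the earlier remark that existence/smoothness phenomena obstruct this type of reasoning for other orthogonalities.
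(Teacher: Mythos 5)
Your proof is correct and follows essentially the same route as the paper: reduce to injectivity via Theorem~\ref{one}, note that a non-injective $T$ has rank one spanned by $Tx$, use the non-smoothness of the extreme point $x$ in the two-dimensional polyhedral space to produce linearly independent directions Birkhoff--James orthogonal to $x$, and derive a contradiction from $Tx\perp_{D^\varepsilon}Ty$ when $Ty$ is a nonzero multiple of $Tx$. The only difference is that you explicitly justify the existence of two distinct norming functionals at the vertex (a step the paper's proof asserts without argument), which is a welcome addition rather than a deviation.
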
    

\begin{proof}
    It follows from Theorem~\ref{one} that to prove the result, it is sufficient to show that $T$ is injective. If $T$ is not injective, then we get Range $T=span\{Tx\}$. Let $y_1,y_2\in S_X$ be linearly independent elements such that $x\perp_B y_1$ and $x\perp_B y_2$. Then we get $Tx\perp_{D^\varepsilon} Ty_1$ and $Tx\perp_{D^\varepsilon} Ty_2$. This clearly shows that $Ty_1=Ty_2=0$, a contradiction. Thus, $T$ is one-to-one and the result follows.
\end{proof}


\end{document}